\documentclass[a4paper,10pt]{amsart}

\usepackage{acatta}

\newcommand{\blankSpace}{\cdot}

\newcommand{\Fix}{\operatorname{Fix}}
\newcommand{\tr}{\operatorname{tr}}
\newcommand{\diag}{\operatorname{diag}}

\renewcommand{\Im}{\operatorname{Im}}
\renewcommand{\Re}{\operatorname{Re}}

\title{Split Nakamura manifolds and their automorphisms}

\author{Andrea Cattaneo}
\address{Universit\`a di Parma\\
Dipartimento di Scienze Matematiche, Fisiche e Informatiche\\
Unit\`a di Matematica e Informatica\\
Parco Area delle Scienze 53/A, 43124, Parma, Italy}
\email{andrea.cattaneo@unipr.it}

\subjclass[2020]{Primary 32M10, Secondary 32C35, 32Q60, 32G05}
\keywords{Nakamura manifolds, solvmanifolds, Dolbeault cohomology, Fr\"olicher spectral sequence, $\del\delbar$-Lemma, $p$-Kähler manifolds, deformation theory, automorphism groups}

\begin{document}

\begin{abstract}
In this paper we study the class of \emph{split Nakamura manifolds}, which are a type of solvmanifolds generalizing Nakamura's threefold, defined as quotients of the semidirect product $\IC^n \rtimes_\rho \IC$ by a lattice. We discuss their de Rham and Dolbeault cohomology, with emphasis on the degeneration of the Fr\"olicher spectral sequence and the $\del\delbar$-Lemma, and their deformations. Finally, we describe their automorphism group in detail.
\end{abstract}

\maketitle

\tableofcontents

\section{Introduction}

The class of compact complex solvmanifolds is particularly rich in manifolds providing behaviors which are very different from the K\"ahler manifolds. In this paper we deal with the class of \emph{Nakamura manifolds}, which were introduced in \cite[$\S$3]{Cattaneo-Tomassini} as a generalization of the completely solvable Nakamura threefold which appeared in \cite[$\S$2, Case III-(3)]{Nakamura}. These manifolds are defined as the quotient of the semidirect product $G = \IC^n \rtimes_\rho \IC$ by the action of (left) translations by the elements of a lattice $\Gamma$. In particular, we will consider mainly those Nakamura manifolds which arise from a lattice whose structure is compatible with the structure of semidirect product of $G$, which we call \emph{split Nakamura manifolds}.

Split Nakamura manifolds are more accessible as they are among those studied by Kasuya in \cite{Kasuya}, where he provides a useful way to compute their Dolbeault cohomology. We will describe their geometry under various aspects. In particular, we will characterize when their Fr\"olicher spectral sequence degenerates at the $E_1$ page (Proposition \ref{prop: frolicher}) and show that this fact is equivalent to the validity of the $\del\delbar$-Lemma (Proposition \ref{prop: del delbar lemma}). Then we concentrate on $p$-K\"ahlerianity of Nakamura manifolds, showing that they are never $p$-K\"ahler unless $p = n$ or $p = n + 1$ (Proposition \ref{prop: p-kahlerianity}). Finally, we address the study of their automorphisms and we show that every automorphism of a split Nakamura manifold can be lifted to an automorphism of $\IC^{n + 1}$ of a very special form, providing necessary and sufficient conditions for a map of such form to be a lift of some automorphism of the split Nakamura manifold in consideration.

Our discussion is far from being complete, as there are many aspects one can address. For example, while we focus mainly on the complex geometric aspects of Nakamura manifolds, in \cite{Lusetti-Tomassini} their symplectic geometry is studied, also in relation with the validity of the Hard Lefschetz Condition.

The structure of the paper is as follows. In section \ref{sect: construction} we give the construction of Nakamura manifolds and give some example. In Section \ref{sect: de Rham} we study the de Rham cohomology, while in Section \ref{sect: dolbeault} the Dolbeault cohomology. In particular, in this Section we compute the Kodaira dimension and we address the degeneration of the Fr\"olicher spectral sequence, the validity of the $\del\delbar$-Lemma and we will dedicate on the study of the cohomology when it is not generated by invariant forms. Section \ref{sect: p-kahlerianity} deals with the $p$-K\"ahler property, while Section \ref{sect: deformations} concerns deformations of Nakamura manifolds and Section \ref{sect: albanese} their Albanese map. The last section, Section \ref{sect: automorphisms} describes the automorphism group of split Nakamura manifolds providing an explicit expression of lifts of automorphisms to the universal cover.

\begin{ack}
The author is a member of the GNSAGA of INdAM and was supported by the project PRIN2022 ``Real and Complex Manifolds: Geometry and Holomorphic Dynamics'' (Project code: 2022AP8HZ9). This research was granted by University of Parma through the action ``Bando di Ateneo 2025 per la ricerca''.

The author warmly acknowledges all the participants of the ``Workshop on Cohomological and metric aspects of Hermitian and almost complex manifolds'' held in Budapest in the period 8$^\text{th}$--12$^\text{th}$ September 2025, where he could discuss and present the results which are the bulk of this paper.
\end{ack}

\section{The construction of Nakamura manifolds}\label{sect: construction}

Fix
\[\lambda_1, \ldots, \lambda_n \in \IR \qquad \text{and} \qquad \tau \in \cH,\]
where $\cH = \set{z \in \IC \st \Im(z) > 0}$ is the complex upper half plane. Define the action of $\IC$ on $\IC^n$
\[\begin{array}{rccl}
\rho = \rho_\tau: & \IC & \longrightarrow & \GL(n, \IC)\\
 & z & \longmapsto & \diag\pa{e^{\lambda_1 D_\tau(w)}, \ldots, e^{\lambda_n D_\tau(w)}},
\end{array}\]
where
\[D_\tau(w) = \frac{\bar{w}\tau - w\bar{\tau}}{\tau - \bar{\tau}}\]
is the first coordinate of $w \in \IC$ with respect to the real basis $\set{1, \tau}$.

We use $\rho_\tau$ to define the semidirect product
\[G = \IC^n \rtimes_{\rho_\tau} \IC,\]
where the group operation is explicitly given by
\[(\beta, \alpha)*(z, w) = \pa{\beta + \rho(\alpha) \cdot z, \alpha + w}, \qquad \beta, z \in \IC^n, \quad \alpha, w \in \IC.\]

\begin{rem}
The identity is $(0, 0)$ and the inverse of the element $(z, w) \in G$ is $(z, w)^{-1} = (-\rho(-w) \cdot z, -w)$.
\end{rem}

\begin{defin}
A \emph{Nakamura manifold} $N$ is any compact complex manifold which is obtained as the quotient of a group $G$ constructed as above by the (left) action of a cocompact lattice $\Gamma \leq G$:
\[N = \Gamma \backslash G.\]
\end{defin}

\begin{rem}
As a matter of notation, we write simply $N$ instead of $N_{\lambda_1, \ldots, \lambda_n; \tau; \Gamma}$ for a Nakamura manifold, omitting the dependence from the choices involved in the construction.
\end{rem}

\subsection{Examples}\label{sect: examples}

We fix the isomorphism of $\IR$-vector spaces
\[\begin{array}{ccl}
\IR^{2n} & \longrightarrow & \IC^n\\
\left( \begin{array}{c} x\\y \end{array} \right) & \longmapsto & z = x + \ii y
\end{array} \qquad \text{for } x, y \in \IR^n,\]
with respect to which the complex structure of $\IC^n$ is given by the matrix
\[J = \left( \begin{array}{c|c}
0 & -\id_n\\
\hline
\id_n & 0
\end{array} \right).\]

Fix a matrix $M \in \SL(2n, \IZ)$ for which there exist a matrix $P \in \GL(2n, \IR)$ and $\lambda_1, \ldots, \lambda_n \in \IR$ such that
\[P \cdot M \cdot P^{-1} = \left( \begin{array}{c|c}
\Delta & 0\\
\hline
0 & \Delta
\end{array} \right), \qquad \text{where } \Delta = \diag\pa{e^{\lambda_1}, \qquad, e^{\lambda_n}}.\]
Write
\[P = \left( \begin{array}{c|c|c}
p_1 & \ldots & p_{2n}\\
\hline
q_1 & \ldots & q_{2n}
\end{array} \right), \qquad \text{with } p_1, \ldots, p_{2n}, q_1, \ldots, q_{2n} \in \IR^n,\]
define
\[B = \left( \begin{array}{c|c} \id_n & \ii \id_n \end{array} \right) \cdot P = \left( \begin{array}{c|c|c} p_1 + \ii q_1 & \ldots & p_{2n} + \ii q_{2n} \end{array} \right)\]
and let $\Gamma' = B \cdot \IZ^{2n} \leq \IC^n$ be the sublattice of $\IC^n$ generated by the columns of $B$.

Let now $\tau \in \cH$ and construct the lattice $\Lambda_\tau = \IZ \oplus \tau \cdot \IZ \leq \IC$. For $\alpha \in \Lambda_\tau$ we have that $D_\tau(\alpha) \in \IZ$ and so
\[\rho_\tau(\alpha) = \diag\pa{e^{\lambda_1 D_\tau(w)}, \ldots, e^{\lambda_n D_\tau(w)}} = \Delta^{D_\tau(\alpha)}.\]
As a consequence, for every $\beta \in \Gamma'$ of the form $\beta = B \cdot c$ for some $c \in \IZ^{2n}$ we have that
\[\rho_\tau(\alpha) \cdot \beta = \left( \begin{array}{c|c} \id_n & \ii \id_n \end{array} \right) \cdot \left( \begin{array}{c|c}
\Delta & 0\\
\hline
0 & \Delta
\end{array} \right)^{D_\tau(\alpha)} \cdot P \cdot c = B \cdot \underbrace{M^{D_\tau(\alpha)} \cdot c}_{\in \IZ^{2n}} \in \Gamma'.\]

As a consequence, the action of $\Lambda_\tau$ on $\IC^n$ via $\rho_\tau$ preserves the lattice $\Gamma'$ and so
\[\Gamma = \Gamma' \rtimes_{\rho_\tau} \Lambda_\tau \leq \IC^n \rtimes_{\rho_\tau} \IC = G\]
is a sublattice, which allows us to construct a Nakamura manifold $N = \Gamma \backslash G$.

\subsubsection{A particular case}\label{sect: from n x n matrix}

Let $M \in \SL(n, \IZ)$ be a matrix for which there exist $P \in \GL(n, \IR)$ and $\lambda_1, \ldots, \lambda_n \in \IR$ such that
\[P \cdot M \cdot P^{-1} = \Delta = \diag\pa{e^{\lambda_1}, \ldots, e^{\lambda_n}}.\]
Let $\tau \in \cH$ and observe that
\[\left( \begin{array}{c|c}
P & \Re(\tau) \cdot P\\
\hline
0 & \Im(\tau) \cdot P
\end{array} \right) \cdot \left( \begin{array}{c|c}
M & 0\\
\hline
0 & M
\end{array} \right) \cdot \left( \begin{array}{c|c}
P & \Re(\tau) \cdot P\\
\hline
0 & \Im(\tau) \cdot P
\end{array} \right)^{-1} = \left( \begin{array}{c|c}
\Delta & 0\\
\hline
0 & \Delta
\end{array} \right).\]
In this case we have that $B = \left( \begin{array}{c|c} P & \tau P \end{array} \right)$ and so $\Gamma' = B \cdot \IZ^{2n} = P \cdot \Lambda_\tau^n$.

Nakamura manifolds of this form were introduced in \cite{Cattaneo-Tomassini} with the choice of $\tau = t \ii$ for some positive $t \in \IR$.

\subsubsection{An isomorphism}

Assume that we have a matrix $M \in \SL(n, \IZ)$ as in Section \ref{sect: from n x n matrix}, for which there exist $\lambda_1, \ldots, \lambda_n \in \IR$ and matrices $P_1, P_2 \in \GL(n, \IR)$ such that
\[P_1 \cdot M \cdot P_1^{-1} = \Delta = P_2 \cdot M \cdot P_2^{-1}, \qquad \Delta = \diag\pa{e^{\lambda_1}, \ldots, e^{\lambda_n}}.\]
Fix $\tau \in \cH$ and consider the lattices $\Lambda_\tau = \IZ \oplus \tau \cdot \IZ$ and
\[\Gamma'_i = P_i \cdot \Lambda_\tau^n, \qquad \Gamma_i = \Gamma'_i \rtimes \Lambda_\tau, \qquad \text{for } i = 1, 2.\]

Define the map
\[\begin{array}{rccc}
F: & \IC^n \rtimes_\rho \IC & \longrightarrow & \IC^n \rtimes_\rho \IC\\
 & (z, w) & \longmapsto & (Rz, w),
\end{array}\]
where $R = P_2 \cdot P_1^{-1}$ and see how $F$ acts on the lattices $\Gamma_1$ and $\Gamma_2$.
\begin{enumerate}
\item First, we observe that
\[\Delta \cdot R = P_2 \cdot M \cdot P_1^{-1} = R \cdot \Delta.\]
\item Let $\beta = P_1 c \in \Gamma'_1$ for some $c \in \Lambda_\tau^n$ and $\alpha \in \Lambda_\tau$, then $(\beta, \alpha) \in \Gamma$ and
\[F(\beta, \alpha) = (R\beta, \alpha) = (P_2 c, \alpha) \in \Gamma_2\]
This means that $F$ is a bijection such that $F(\Gamma_1) = \Gamma_2$.
\item Let $(z, w), (z', w') \in \IC^n \rtimes_\rho \IC$ be such that $(z', w') = (\beta, \alpha)*(z, w)$ for some $(\beta, \alpha) \in \Gamma_1$. Then
\[\begin{array}{rl}
F(z', w') = & F((\beta, \alpha)*(z, w)) =\\
= & F(\beta + \rho(\alpha) \cdot z, \alpha + w) =\\
= & F(\beta + \Delta^{D(\alpha)} \cdot z, \alpha + w) =\\
= & (R\beta + R\Delta^{D(\alpha)} \cdot z, \alpha + w) =\\
= & (R\beta + \Delta^{D(\alpha)}R \cdot z, \alpha + w) =\\
= & (R\beta, \alpha)*(Rz, w) =\\
= & F(\beta, \alpha)*F(z, w).
\end{array}\]
\end{enumerate}
As a consequence $F$ induces a map on the corresponding Nakamura manifolds, say $f: N_1 = \Gamma_1 \backslash C^n \rtimes_\rho \IC \longrightarrow N_2 = \Gamma_2 \backslash C^n \rtimes_\rho \IC$, which is a biholomorphism (of course, the inverse is induced by $F^{-1}$).

This shows that the construction exposed in Section \ref{sect: from n x n matrix} does not depend on the choice of the diagonalizing matrix $P$, but only on $M \in \SL(n, \IZ)$ and $\tau \in \cH$.

\subsection{Faithfulness and freedom of the action \texorpdfstring{$\rho_\tau$}{rho}}\label{sect: faithfulness and freedom}

In this Section we characterize when $\rho_\tau$ is faithful and study the fixed locus of $\rho_\tau(\alpha)$ for $\alpha \in \Lambda_\tau$.

\begin{prop}[Faithfulness]\label{prop: faithfulness}
The action $\rho_\tau$ is never faithful. More precisely
\[\ker(\rho_\tau) = \left\{ \begin{array}{ll}
\IC & \text{if } \lambda_i = 0 \text{ for every } i,\\
\tau \cdot \IR & \text{if } \lambda_i \neq 0 \text{ for some } i.
\end{array} \right.\]
\end{prop}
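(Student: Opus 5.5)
The plan is to compute $\ker(\rho_\tau)$ directly from the formula defining $\rho_\tau$. Since $\rho_\tau(w)$ is the diagonal matrix with entries $e^{\lambda_i D_\tau(w)}$, we have $\rho_\tau(w) = \id_n$ exactly when $e^{\lambda_i D_\tau(w)} = 1$ for every $i$. The key point is that $D_\tau(w)$ is real: it is the first real coordinate of $w$ with respect to the real basis $\set{1, \tau}$, and one checks directly that the expression $\frac{\bar{w}\tau - w\bar{\tau}}{\tau - \bar{\tau}}$ equals its own conjugate. Consequently each $\lambda_i D_\tau(w)$ is a real number. For real $x$, the exponential $e^{x}$ equals $1$ if and only if $x = 0$. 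This reduces the condition $\rho_\tau(w) = \id_n$ to the system $\lambda_i D_\tau(w) = 0$ for $i = 1, \ldots, n$.

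We then split into the two cases of the statement. If $\lambda_i = 0$ for every $i$, the system holds for every $w$, and $\rho_\tau$ is the trivial homomorphism, so its kernel is all of $\IC$. If $\lambda_{i_0} \neq 0$ for some $i_0$, the system is equivalent to $D_\tau(w) = 0$. Writing $w = a + b\tau$ with $a, b \in \IR$, we have $D_\tau(w) = a$, so the kernel is $\set{b\tau \st b \in \IR} = \tau \cdot \IR$. To confirm the coordinate interpretation concretely, substitute $w = a + b\tau$ into the formula: the numerator is $(a + b\bar\tau)\tau - (a + b\tau)\bar\tau = a(\tau - \bar\tau)$, which gives $D_\tau(w) = a$.

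"Never faithful" follows at once, because in both cases the kernel contains $\tau \cdot \IR \neq \set{0}$. There is no real obstacle here. The only points needing care are the reality of $D_\tau(w)$, which makes injectivity of the real exponential available (unlike the complex one, where $2\pi\ii\IZ$ would appear), and the verification that $D_\tau$ really is the coordinate functional.
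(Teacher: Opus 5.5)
Your proposal is correct and follows the same route as the paper: the trivial case gives $\IC$, and otherwise the kernel is $\set{w \st D_\tau(w) = 0} = \tau \cdot \IR$. You make explicit two facts the paper leaves implicit: $D_\tau(w)$ is real, so $e^{\lambda_i D_\tau(w)} = 1$ forces $\lambda_i D_\tau(w) = 0$, and $D_\tau(a + b\tau) = a$.
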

\begin{proof}
If $\lambda_i = 0$ for every $i \in \set{1, \ldots, n}$, then $\rho$ is trivial and so $\ker(\rho) = \IC$. On the contrary, if $\lambda_i \neq 0$ for some $i \in \set{1, \ldots, n}$ then we see that
\[\ker(\rho) = \set{w \in \IC \st D_\tau(w) = 0} = \tau \cdot \IR \leq \IC.\]
\end{proof}

\begin{rem}
It follows from Proposition \ref{prop: faithfulness} that complex tori are Nakamura manifolds corresponding to $\lambda_i = 0$ for every $i$.
\end{rem}

\begin{prop}[Freedom]\label{prop: freedom}
Assume that $\rho_\tau$ is non-trivial and that $w \in \IC \smallsetminus \ker(\rho_\tau)$. Then
\[\Fix(\rho_\tau(w)) = \Span \set{e_i \st i \text{ satisfies } \lambda_i = 0}.\]
\end{prop}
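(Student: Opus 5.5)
The plan is to compute the fixed locus directly from the diagonal form of $\rho_\tau(w)$. By definition $\rho_\tau(w) = \diag\pa{e^{\lambda_1 D_\tau(w)}, \ldots, e^{\lambda_n D_\tau(w)}}$. Its fixed locus is the eigenspace for eigenvalue $1$, i.e.\ the kernel of $\rho_\tau(w) - \id_n$. Since $\rho_\tau(w) - \id_n$ is itself diagonal, this kernel is spanned by the standard basis vectors $e_i$ whose diagonal entry $e^{\lambda_i D_\tau(w)} - 1$ vanishes. So the claim reduces to one check: for each index $i$, we have $e^{\lambda_i D_\tau(w)} = 1$ if and only if $\lambda_i = 0$.

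To carry out this check, the first step is to invoke Proposition \ref{prop: faithfulness}. Since $\rho_\tau$ is non-trivial, some $\lambda_j \neq 0$, so $\ker(\rho_\tau) = \tau \cdot \IR = \set{w \st D_\tau(w) = 0}$. The hypothesis $w \notin \ker(\rho_\tau)$ therefore says exactly that $D_\tau(w) \neq 0$. The second step is to observe that $D_\tau(w)$ is real: it is the first real coordinate of $w$ in the basis $\set{1, \tau}$, and one can also see directly that $\overline{D_\tau(w)} = D_\tau(w)$. As the $\lambda_i$ are real as well, $\lambda_i D_\tau(w) \in \IR$. The real exponential is injective, so $e^{\lambda_i D_\tau(w)} = 1$ if and only if $\lambda_i D_\tau(w) = 0$. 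Because $D_\tau(w) \neq 0$, this happens if and only if $\lambda_i = 0$.

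Combining the two steps gives $\Fix(\rho_\tau(w)) = \Span \set{e_i \st \lambda_i = 0}$. The only point needing care, and the one I expect to be the main obstacle, is the realness of $D_\tau(w)$. Without it, complex exponents could produce roots of unity and the fixed locus would be larger, so I will verify the conjugation identity explicitly. The rest is routine linear algebra.
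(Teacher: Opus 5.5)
Your proof is correct and follows the paper's argument: both reduce to the diagonal equations $(e^{\lambda_i D_\tau(w)}-1)z_i = 0$ and use $D_\tau(w)\neq 0$ (from $w \notin \tau\cdot\mathbb{R}$) to force $z_i = 0$ exactly when $\lambda_i \neq 0$. Your explicit check that $D_\tau(w)$ is real, so that $e^{\lambda_i D_\tau(w)}$ cannot be a nontrivial root of unity, spells out a step the paper leaves implicit.
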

\begin{proof}
A vector $z = (z_1, \ldots, z_n) \in \IC^n$ is fixed by $\rho(w)$ if and only if $e^{\lambda_i D_\tau(w)}z_i = z_i$ for every $i$, i.e., if and only if $\pa{e^{\lambda_i D_\tau(w)} - 1} z_i = 0$. For any fixed $i \in \set{1, \ldots, n}$ there are two possibilities:
\begin{enumerate}
\item $\lambda_i = 0$. In this case the $i^\text{th}$ equation is trivial and provides no information.
\item $\lambda_i \neq 0$. In this case we deduce that $z_i = 0$ as $D_\tau(w) \neq 0$.
\end{enumerate}
Hence
\[\begin{array}{rl}
\Fix(\rho(w)) = & \set{(z_1, \ldots, z_n) \st z_i = 0 \text{ for every } i \text{ such that } \lambda_i \neq 0} =\\
= & \Span \set{e_i \st i \text{ satisfies } \lambda_i = 0}.
\end{array}\]
\end{proof}

\begin{rem}
Under the assumptions of Proposition \ref{prop: freedom}, we see that $\Fix(\rho_\tau(w))$ does not depend on $w \in \IC \smallsetminus \ker(\rho_\tau)$ and that
\[\dim(\Fix(\rho_\tau(w))) = \#\set{i \in \set{1, \ldots, n} \st \lambda_i = 0} < n.\]
\end{rem}

\begin{cor}
If $\lambda_i \neq 0$ for every $i = \set{1, \ldots, n}$ then
\[\Fix(\rho(w)) = \left\{ \begin{array}{ll}
\IC^n & \text{if } w \in \tau \cdot \IR,\\
\set{0} & \text{if } w \notin \tau \cdot \IR.
\end{array} \right.\]
\end{cor}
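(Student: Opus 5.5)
The plan is to split the statement according to whether $w$ lies in $\ker(\rho_\tau)$, using Proposition \ref{prop: faithfulness} and Proposition \ref{prop: freedom}. Since $\lambda_i \neq 0$ for every $i$, the action $\rho_\tau$ is non-trivial. In particular we are in the second case of Proposition \ref{prop: faithfulness}, which gives $\ker(\rho_\tau) = \tau \cdot \IR$.

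\emph{Case $w \in \tau \cdot \IR$.} Here $w \in \ker(\rho_\tau)$, so $\rho_\tau(w) = \id_{\IC^n}$. Every vector is then fixed, and $\Fix(\rho(w)) = \IC^n$. One can also see this directly: $D_\tau(w) = 0$ for $w \in \tau \cdot \IR$, because $D_\tau$ is the first coordinate in the real basis $\set{1, \tau}$. Hence each diagonal entry $e^{\lambda_i D_\tau(w)}$ equals $1$.

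\emph{Case $w \notin \tau \cdot \IR$.} Now $w \in \IC \smallsetminus \ker(\rho_\tau)$ and $\rho_\tau$ is non-trivial, so the hypotheses of Proposition \ref{prop: freedom} hold. It gives $\Fix(\rho_\tau(w)) = \Span \set{e_i \st \lambda_i = 0}$. Since no index satisfies $\lambda_i = 0$, the spanning set is empty and $\Fix(\rho(w)) = \set{0}$.

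There is no real obstacle here. The only point needing care is the identification $\ker(\rho_\tau) = \tau \cdot \IR$. This rests on the injectivity of $t \mapsto e^{\lambda_i t}$ when $\lambda_i \neq 0$, and that is already contained in the proof of Proposition \ref{prop: faithfulness}.
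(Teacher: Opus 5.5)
Your proof is correct and follows the paper's approach: the paper gives this corollary no separate proof, treating it as an immediate consequence of Proposition \ref{prop: faithfulness} (so $\ker(\rho_\tau) = \tau \cdot \IR$, where $\rho_\tau(w) = \id$) and Proposition \ref{prop: freedom} (so $\Fix(\rho_\tau(w)) = \Span\set{e_i \st \lambda_i = 0} = \set{0}$ off the kernel). Your direct check that $D_\tau(w) = 0$ on $\tau \cdot \IR$, and your remark that $e^{\lambda_i t} = 1$ forces $t = 0$ for real $t$ when $\lambda_i \neq 0$, are the only facts these propositions rely on, so nothing is missing.
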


\section{de Rham cohomology}\label{sect: de Rham}

In this section we describe the de Rham cohomology of a Nakamura manifold associated to $\lambda_1, \ldots, \lambda_n \in \IR$ and $\tau \in \cH$.

Introduce real coordinates by means of
\[z_i = x_i + \ii y_i, \qquad w = r + \ii s \qquad (i = 1, \ldots, n)\]
and define the forms (on $G$):
\[\begin{array}{lll}
e^0 = dr, & f^0 = ds, & \\
e^i = e^{-D_\tau(w) \lambda_i}ndx_i, & f^i = e^{-D_\tau(w) \lambda_i} dy_i, & i = 1, \ldots, n.
\end{array}\]
These forms are invariant for the action of $\Gamma$ on $G$ by multiplication on the left, hence they descend to forms on $N$ (which will maintain the same name). We can then compute that
\[\begin{array}{lll}
d e^0 = 0, & d e^i = -\lambda_i \pa{e^0 - \frac{\Re(\tau)}{\Im(\tau)} f^0} \wedge e^i, & \\
d f^0 = 0, & d f^i = -\lambda_i \pa{e^0 - \frac{\Re(\tau)}{\Im(\tau)} f^0} \wedge f^i, & i = 1, \ldots, n.
\end{array}\]
Let now $\set{e_0, f_0, e_1, f_1, \ldots, e_n, f_n}$ be the basis of vector fields dual to the basis $\set{e^0, f^0, e^1, f^1, \ldots, e^n, f^n}$, it follows that the only non-trivial commutators among these elements are
\[\begin{array}{lll}
[e_0, e_i] = \lambda_i e_i, & [f_0, e_i] = -\frac{\Re(\tau)}{\Im(\tau)} \lambda_i e_i, & \\
{[e_0, f_i]} = \lambda_i f_i, & [f_0, f_i] = -\frac{\Re(\tau)}{\Im(\tau)} \lambda_i f_i, & i = 1, \ldots, n.
\end{array}\]
As a consequence, if we call $\gothg$ the Lie algebra of $G$ (which is generated by the vectors $e_0, f_0, e_1, f_1, \ldots, e_n, f_n$) we see that
\[{[\gothg, \gothg]} = \Span \set{e_i, f_i \st i \text{ satisfies } \lambda_i \neq 0} \leq \Span\set{e_1, f_1, \ldots, e_n, f_n}\]
and so
\[{[[\gothg, \gothg], \gothg]} = [\gothg, \gothg] \qquad \text{and} \qquad [[\gothg, \gothg], [\gothg, \gothg]] = 0.\]
This shows the following lemma.

\begin{lemma}
The Lie algebra $\gothg$ is solvable, and it is not nilpotent unless $\lambda_i = 0$ for every $i = 1, \ldots, n$.
\end{lemma}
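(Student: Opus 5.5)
The plan is to read both claims directly off the bracket relations computed above for the basis $\set{e_0, f_0, e_1, f_1, \ldots, e_n, f_n}$ of $\gothg$. Write $\gothg^{(1)} = [\gothg, \gothg]$ for the derived algebra and $\gothg_{(1)} = \gothg$, $\gothg_{(k+1)} = [\gothg_{(k)}, \gothg]$ for the lower central series. The only non-zero brackets are $[e_0, \cdot]$ and $[f_0, \cdot]$ applied to $e_i$ or $f_i$, and each of these lands in $\Span\set{e_i, f_i}$ with coefficient $\lambda_i$ or $-\frac{\Re(\tau)}{\Im(\tau)}\lambda_i$. Hence $\gothg^{(1)} = \Span\set{e_i, f_i \st \lambda_i \neq 0}$: the inclusion $\subseteq$ holds because all brackets lie there, and $\supseteq$ holds because $e_i = \lambda_i^{-1}[e_0, e_i]$ and $f_i = \lambda_i^{-1}[e_0, f_i]$ whenever $\lambda_i \neq 0$.

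For solvability, note that $\gothg^{(1)}$ is contained in $\Span\set{e_1, f_1, \ldots, e_n, f_n}$, which is an abelian subalgebra, since no bracket among the $e_i, f_i$ with $i \geq 1$ is non-trivial. So $\gothg^{(2)} = [\gothg^{(1)}, \gothg^{(1)}] = 0$, and $\gothg$ is solvable of derived length at most $2$.

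For the nilpotency statement I split into two cases. If $\lambda_i = 0$ for every $i$, all brackets vanish, so $\gothg$ is abelian and hence nilpotent; this gives the ``unless'' clause. If instead some $\lambda_i \neq 0$, I would prove by induction on $k$ that $\gothg_{(k)} = \gothg^{(1)} \neq 0$ for all $k \geq 2$. The base case is the definition. For the inductive step, $\gothg_{(k+1)} = [\gothg^{(1)}, \gothg] \subseteq \gothg^{(1)}$, and it contains each generator $e_i = \lambda_i^{-1}[e_0, e_i]$ and $f_i = \lambda_i^{-1}[e_0, f_i]$ with $\lambda_i \neq 0$, because $e_i, f_i \in \gothg^{(1)}$ and $e_0 \in \gothg$. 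Hence $[[\gothg,\gothg],\gothg] = [\gothg,\gothg]$ as stated above, the lower central series stabilizes at a non-zero subalgebra, and $\gothg$ is not nilpotent.

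There is no real obstacle. The only point needing care is the reverse inclusion in the description of $\gothg^{(1)}$, that is, checking that the relevant $e_i, f_i$ are actually attained as brackets. This uses exactly that $\lambda_i \neq 0$, via the relation $[e_0, e_i] = \lambda_i e_i$.
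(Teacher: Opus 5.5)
Your proof is correct and follows the paper's argument. The paper also reads off $[\gothg,\gothg] = \Span\{e_i, f_i \mid \lambda_i \neq 0\}$ from the bracket relations and concludes from $[[\gothg,\gothg],[\gothg,\gothg]] = 0$ and $[[\gothg,\gothg],\gothg] = [\gothg,\gothg]$; you additionally spell out the reverse inclusion and the stabilization of the lower central series, which the paper leaves implicit.
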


Consider now a vector
\[v = \sum_{i = 0}^n \mu_i e_i + \nu_i f_i \in \gothg\]
and the corresponding adjoint morphism $\operatorname{ad}(v) = [v, \blank]$. The matrix representing it with respect to our basis is
\[
\resizebox{\textwidth}{!}{$
\setlength{\arraycolsep}{3pt}
\left(
\begin{array}{cc|ccc|ccc}
0 & 0 & 0 & \ldots & 0 & 0 & \ldots & 0\\
0 & 0 & 0 & \ldots & 0 & 0 & \ldots & 0\\
\hline
-\lambda_1 \mu_1 & \frac{\Re(\tau)}{\Im(\tau)}\lambda_1 \mu_1 & 
\lambda_1 \pa{\mu_0 - \frac{\Re(\tau)}{\Im(\tau)} \mu_0} & \ldots & 0 & 0 & \ldots & 0\\
\vdots & \vdots & \vdots & \ddots & \vdots & \vdots & \ddots & \vdots\\
-\lambda_n \mu_n & \frac{\Re(\tau)}{\Im(\tau)}\lambda_n \mu_n & 
0 & \ldots & \lambda_n \pa{\mu_0 - \frac{\Re(\tau)}{\Im(\tau)} \mu_0} & 0 & \ldots & 0\\
\hline
-\lambda_1 \nu_1 & \frac{\Re(\tau)}{\Im(\tau)}\lambda_1 \nu_1 & 
0 & \ldots & 0 & \lambda_1 \pa{\mu_0 - \frac{\Re(\tau)}{\Im(\tau)} \mu_0} & \ldots & 0\\
\vdots & \vdots & \vdots & \ddots & \vdots & \vdots & \ddots & \vdots\\
-\lambda_n \nu_n & \frac{\Re(\tau)}{\Im(\tau)}\lambda_n \nu_n & 
0 & \ldots & 0 & 0 & \ldots & \lambda_n \pa{\mu_0 - \frac{\Re(\tau)}{\Im(\tau)} \mu_0}
\end{array}
\right)
$}
\]
which is lower triangular. As a consequence we easily see that all the eigenvalues are real, i.e., that $\gothg$ is completely solvable.

By Hattori's Theorem \cite[Corollary 4.2]{Hattori} the de Rham cohomology of a Nakamura manifold $N$ is the Chevalley--Eilenberg cohomology of the Lie algebra $\gothg$.

\begin{prop}\label{prop: sum is zero}
Let $N = \Gamma \backslash G$ be a Nakamura manifold, corresponding to $\lambda_1, \ldots, \lambda_n \in \IR$. Then
\[\sum_{i = 1}^n \lambda_i = 0.\]
\end{prop}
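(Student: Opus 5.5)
The strategy is to use unimodularity: a Lie group admitting a cocompact lattice must be unimodular, so $\tr(\operatorname{ad}(v)) = 0$ for every $v \in \gothg$. Together with the explicit form of $\operatorname{ad}(v)$ computed above, this forces the sum of the $\lambda_i$ to vanish. Since citing unimodularity is somewhat external to the paper, I would give a self-contained version of the same argument through de Rham cohomology and the invariant forms already introduced.

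First I consider the real $(2n+2)$-form
\[\Omega = e^0 \wedge f^0 \wedge e^1 \wedge f^1 \wedge \cdots \wedge e^n \wedge f^n = e^{-2D_\tau(w)\sum_i \lambda_i}\, dr\wedge ds\wedge dx_1\wedge dy_1\wedge\cdots\wedge dx_n\wedge dy_n.\]
It is left-invariant under $\Gamma$ because each factor is, so it descends to $N$. It is nowhere vanishing, hence a volume form on the compact oriented manifold $N$, and therefore $\int_N \Omega \neq 0$.

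Second, I consider the $(2n+1)$-form $\eta = f^0 \wedge e^1 \wedge f^1 \wedge \cdots \wedge e^n \wedge f^n$, which also descends to $N$. From the structure equations, $de^i = -\lambda_i \theta\wedge e^i$ and $df^i = -\lambda_i\theta\wedge f^i$ with $\theta = e^0 - \frac{\Re(\tau)}{\Im(\tau)}f^0$. The Leibniz rule then gives
\[d\eta = -f^0\wedge\Big(\textstyle\sum_i 2\lambda_i\Big)\,\theta\wedge e^1\wedge\cdots\wedge f^n.\]
The $f^0$ component of $\theta$ is killed by the wedge with $f^0$. After reordering, this leaves
\[d\eta = \pm\, 2\Big(\sum_{i=1}^n \lambda_i\Big)\,\Omega.\]
I would double-check the sign, but it is irrelevant.

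By Stokes' theorem, $\int_N d\eta = 0$. Combined with $\int_N\Omega\neq 0$, this forces $\sum_i\lambda_i = 0$.

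The main obstacle is the bookkeeping in computing $d\eta$. One has to check that no $f^0$ or $e^0$ terms survive except the single $e^0$ coming from $\theta$, and that each $e^i$ and $f^i$ contributes $\lambda_i$, giving the factor $2\sum\lambda_i$. Everything else is immediate from the invariance of the forms, which the paper has already established. An alternative, equivalent phrasing is that $\tr \operatorname{ad}(e_0) = 2\sum\lambda_i$ from the lower-triangular matrix above, and unimodularity of $G$, as a group with a lattice, requires this trace to vanish.
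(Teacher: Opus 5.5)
Your proof is correct. Your closing ``alternative phrasing'' is exactly the paper's proof: the paper cites the fact that a Lie group with a cocompact lattice is unimodular, so $\tr(\operatorname{ad}(e_0)) = 0$, and then reads off $\tr(\operatorname{ad}(e_0)) = 2\sum_i \lambda_i$ from $[e_0, e_i] = \lambda_i e_i$ and $[e_0, f_i] = \lambda_i f_i$.

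Your main Stokes argument is a genuinely different route: it is a self-contained proof of exactly the consequence of unimodularity that is needed here. Write $\Omega = e^0 \wedge f^0 \wedge \mu$ with $\mu = e^1 \wedge f^1 \wedge \cdots \wedge e^n \wedge f^n$. Then your $\eta$ is $\iota_{e_0}\Omega$, so Cartan's formula gives $d\eta = \mathcal{L}_{e_0}\Omega = -\tr(\operatorname{ad}(e_0))\,\Omega$, and integrating over the compact quotient forces this trace to vanish. This is the standard argument that $\tr \operatorname{ad} \equiv 0$ on the Lie algebra of any Lie group admitting a cocompact lattice, specialized to $e_0$. Your route avoids both the general theorem and the matrix of $\operatorname{ad}$, using only the invariant coframe and structure equations already in the paper. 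The paper's route gains brevity.

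On the sign you flagged: $d\eta = -f^0 \wedge d\mu$ and $d\mu = -2\left(\sum_i \lambda_i\right)\theta \wedge \mu$. Hence $d\eta = 2\left(\sum_i \lambda_i\right) f^0 \wedge e^0 \wedge \mu = -2\left(\sum_i \lambda_i\right)\Omega$. Your intermediate display has the opposite sign, which, as you say, does not affect the conclusion.
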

\begin{proof}
Since $G$ contains a cocompact lattice, it is unimodular. In particular, we have that $\tr(\operatorname{ad}(e_0)) = 0$. Using the representative matrix above we see that $\tr(\operatorname{ad}(e_0)) = 2 \sum_{i = 1}^n \lambda_i$, from which the result follows.
\end{proof}

A direct consequence of this fact is the following: we have that
\[\det(\Delta) = \det\pa{\diag\pa{e^{\lambda_1}, \ldots, e^{\lambda_n}}} = e^{\sum_{i = 1}^n \lambda_i} = 1,\]
which justifies that in Section \ref{sect: examples} we chose our matrix $M$ in $\SL(n, \IZ)$.

\section{Dolbeault cohomology}\label{sect: dolbeault}
\subsection{Invariant \texorpdfstring{$(1, 0)$}{(1, 0)}-forms}
Let $N = \Gamma \backslash G$ be a Nakamura manifold corresponding to a choice of $\lambda_1, \ldots, \lambda_n \in \IR$ and $\tau \in \cH$.

Given $(\beta, \alpha) \in \Gamma$, the left translation by $(\beta, \alpha)$ is the map
\[\begin{array}{rccl}
L_{(\beta, \alpha)}: & G & \longrightarrow & G\\
 & (z, w) & \longmapsto & (\beta, \alpha)*(z, w) = (\beta + \rho_\tau(\alpha) \cdot z, \alpha + w).
\end{array}\]

Consider the following $(1, 0)$-forms on $G$:
\[\varphi^0 = dw, \qquad \varphi^i = e^{-D_\tau(w)\lambda_i}dz_i \quad (\text{for } i = 1, \ldots, n),\]
it is then immediate to verify that they are invariant under the action of left translations by elements of $\Gamma$, hence that they descend to well defined $(1, 0)$-forms on $N$, with the same names.

Moreover, we can compute the structure equations:
\[d\varphi^0 = 0, \qquad d\varphi^i = \frac{\lambda_i}{\tau - \bar{\tau}}\pa{\bar{\tau}\varphi^0 - \tau \bar{\varphi}^0} \wedge \varphi^i \quad (\text{for } i = 1, \ldots, n).\]
In particular, we see that
\[\begin{array}{lll}
\delbar \varphi^0 = 0, & \delbar \varphi^i = \lambda_i \frac{\tau}{\tau - \bar{\tau}} \varphi^i \wedge \bar{\varphi}^0, & \\
\delbar \bar{\varphi}^0 = 0 & \delbar \bar{\varphi}^i = -\lambda_i \frac{\tau}{\tau - \bar{\tau}} \bar{\varphi}^0 \wedge \bar{\varphi}^i, & (\text{for } i = 1, \ldots, n).
\end{array}\]

\subsection{Holomorphic \texorpdfstring{$(n + 1, 0)$}{(n + 1, 0)}-forms and Kodaira dimension}

It follows from the previous section that Nakamura manifolds are never holomorphically parallelizable (i.e., with holomorphically trivial tangent bundle), unless they are complex tori.

\begin{prop}\label{prop: trivial canonical}
The canonical bundle $\omega_N$ of a Nakamura manifold $N$ is holomorphically trivial.
\end{prop}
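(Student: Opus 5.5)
The plan is to write down an explicit nowhere-vanishing holomorphic $(n+1,0)$-form on $N$ and check that it is well defined and holomorphic. The natural candidate is
\[\Omega = \varphi^0 \wedge \varphi^1 \wedge \cdots \wedge \varphi^n = e^{-D_\tau(w)\sum_{i=1}^n \lambda_i}\, dw \wedge dz_1 \wedge \cdots \wedge dz_n.\]
Each $\varphi^i$ is invariant under left translations by elements of $\Gamma$, as observed in the previous subsection. Hence $\Omega$ is $\Gamma$-invariant and descends to a global $(n+1,0)$-form on $N$. It is nowhere vanishing because the $\varphi^i$ form a coframe of $T^{1,0}$ at every point.

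The key step is holomorphicity, i.e.\ $\delbar \Omega = 0$. I will argue it in two ways.

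\emph{First way: use Proposition \ref{prop: sum is zero}.} Since $\sum_{i=1}^n \lambda_i = 0$, the exponential factor is identically $1$. So $\Omega = dw \wedge dz_1 \wedge \cdots \wedge dz_n$, which is visibly closed and holomorphic on $G$.

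\emph{Second way: use the structure equations.} By the Leibniz rule,
\[\delbar \Omega = \sum_{i=1}^n (-1)^{i}\, \varphi^0 \wedge \cdots \wedge \delbar\varphi^i \wedge \cdots \wedge \varphi^n .\]
Substituting $\delbar\varphi^i = \lambda_i \frac{\tau}{\tau-\bar\tau}\varphi^i\wedge\bar\varphi^0$ and moving $\bar\varphi^0$ to the end, every term becomes a multiple of $\varphi^0\wedge\cdots\wedge\varphi^n\wedge\bar\varphi^0$. The total coefficient is $\pm\frac{\tau}{\tau-\bar\tau}\sum_i\lambda_i$, which vanishes again by Proposition \ref{prop: sum is zero}.

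Once holomorphicity is established, $\Omega$ is a nowhere-vanishing holomorphic section of $\omega_N = K_N$. It therefore trivializes the canonical bundle holomorphically, which proves the statement.

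The only genuine input is the unimodularity relation $\sum\lambda_i=0$, which is already available from Proposition \ref{prop: sum is zero}. The sole point to double-check is sign bookkeeping in the second argument, and the first argument bypasses it entirely. So I expect no real obstacle.
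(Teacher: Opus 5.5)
Your proposal is correct and is essentially the paper's proof. The paper uses the same form $\psi=\varphi^0\wedge\varphi^1\wedge\cdots\wedge\varphi^n$ as a smooth trivialization and shows $d\psi=\delbar\psi=\frac{\tau}{\tau-\bar\tau}\,(\sum_i\lambda_i)\,\varphi^0\wedge\bar\varphi^0\wedge\varphi^1\wedge\cdots\wedge\varphi^n=0$ by Proposition \ref{prop: sum is zero}, which is your second argument; your first argument, observing that $\psi$ is literally $dw\wedge dz_1\wedge\cdots\wedge dz_n$, is a valid shortcut resting on the same input.
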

\begin{proof}
The $(n + 1, 0)$-form
\[\psi = \varphi^0 \wedge \varphi^1 \wedge \ldots \wedge \varphi^n\]
is a smooth trivialization for $\omega_N$. To show that it is holomorphic, we compute that
\[d\psi = \delbar \psi = \frac{\tau}{\tau - \bar{\tau}} \pa{\sum_{i = 1}^n \lambda_i} \varphi^0 \wedge \bar{\varphi}^0 \wedge \varphi^1 \wedge \ldots \wedge \varphi^n = 0\]
by Proposition \ref{prop: sum is zero}.
\end{proof}

\begin{cor}
A Nakamura manifold $N$ has $\kod(N) = 0$.
\end{cor}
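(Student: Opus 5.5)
The plan is to deduce the corollary directly from Proposition \ref{prop: trivial canonical}. Recall that the Kodaira dimension is defined through the plurigenera $P_m(N) = h^0(N, \omega_N^{\otimes m})$ for $m \geq 1$. We have $\kod(N) = -\infty$ if all plurigenera vanish. Otherwise $\kod(N)$ is the growth exponent of $P_m(N)$ in $m$, equivalently the maximal dimension of the image of the pluricanonical maps.

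By Proposition \ref{prop: trivial canonical}, the form $\psi = \varphi^0 \wedge \varphi^1 \wedge \ldots \wedge \varphi^n$ is a nowhere vanishing holomorphic section of $\omega_N$. Hence $\omega_N \cong \cO_N$ holomorphically, and therefore $\omega_N^{\otimes m} \cong \cO_N$ for every $m \geq 1$, trivialized by $\psi^{\otimes m}$.

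Since $N$ is compact and connected, the maximum principle shows that the only global holomorphic functions are the constants. Thus
\[P_m(N) = h^0(N, \cO_N) = 1 \qquad \text{for every } m \geq 1.\]
In particular the plurigenera are nonzero, so $\kod(N) \neq -\infty$. They are also bounded, so their growth rate in $m$ is $m^0$. Equivalently, each pluricanonical map sends $N$ to a single point. Either way we obtain $\kod(N) = 0$.

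There is no real obstacle here. The only points to check are the standard ones: connectedness of $N$, which holds because $G$ is connected, and the use of the maximum principle for holomorphic functions on a compact complex manifold. Everything substantial is already contained in Proposition \ref{prop: trivial canonical}, which itself depends on $\sum_{i=1}^n \lambda_i = 0$ from Proposition \ref{prop: sum is zero}.
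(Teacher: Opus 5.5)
Your proposal is correct and follows the paper's approach. The paper states this corollary immediately after Proposition \ref{prop: trivial canonical} with no separate proof, relying on the same reasoning you give: $\omega_N \simeq \cO_N$ forces $P_m(N) = 1$ for every $m \geq 1$, hence $\kod(N) = 0$.
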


\subsection{Dolbeault cohomology and \texorpdfstring{$\del\delbar$}{del-delbar}-Lemma}

From this section we will assume that $N = \Gamma \backslash G$ is a Nakamura manifold corresponding to a choice of $\Gamma$ which is compatible with the structure of $G$ as semidirect product.

\begin{defin}
A Nakamura manifold $N = \Gamma \backslash G$ corresponding to $\lambda_1, \ldots, \lambda_n \in \IR$ and $\tau \in \cH$ is called a \emph{split Nakamura manifold} if
\[\Gamma = \Gamma' \rtimes_{\rho_\tau} \Lambda_\tau.\]
\end{defin}

\begin{exam}
All the examples described in Section \ref{sect: examples} are split Nakamura manifolds.
\end{exam}

This requirement allows us to use \cite[Corollary 4.2]{Kasuya} to compute the Dolbeault cohomology of $N$ as follows.

The Dolbeault cohomology of a split Nakamura manifold $N$ is computed by the subcomplex $B^{\bullet, \bullet}$ of $\Lambda^{\bullet, \bullet} N$ such that $B^{p, q}$ is the $\IC$-vector space generated by the forms of type
\[\begin{array}{llll}
f_{IJ}(w) \cdot \varphi^I \wedge \bar{\varphi}^J & \text{with} & |I| = p, & |J| = q,\\
f_{IJ}(w) \cdot \varphi^0 \wedge \varphi^I \wedge \bar{\varphi}^J & \text{with} & |I| = p - 1, & |J| = q,\\
f_{IJ}(w) \cdot \bar{\varphi}^0 \wedge \varphi^I \wedge \bar{\varphi}^J & \text{with} & |I| = p, & |J| = q - 1,\\
f_{IJ}(w) \cdot \varphi^0 \wedge \bar{\varphi}^0 \wedge \varphi^I \wedge \bar{\varphi}^J & \text{with} & |I| = p - 1, & |J| = q - 1,\\
\end{array}\]
where $I, J$ are the multi-indices for which the function
\[f_{IJ}(w) = e^{-\ii c_{IJ} \frac{\Re(\bar{\tau}w)}{\Im(\tau)}}, \qquad c_{IJ} = \sum_{\sigma = 1}^{|I|} \lambda_{i_\sigma} + \sum_{\xi = 1}^{|J|} \lambda_{i_\xi}\]
satisfies $f_{IJ}(\alpha) = 1$ for every $\alpha \in \Lambda_\tau$.

\begin{prop}\label{prop: iso cohomology}
For every $p \geq 0$ the complex $\pa{B^{p, \bullet}, \delbar}$ has trivial differential. As a consequence
\[H^{p, q}_{\delbar}(N) \simeq B^{p, q}.\]
\end{prop}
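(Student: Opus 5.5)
Since $B^{p,q}$ is spanned by the explicit generators listed above, and $\delbar$ is a derivation, it suffices to check that $\delbar$ kills each generator. I will compute $\delbar$ of a product $f_{IJ}(w)\,\varphi^I\wedge\bar\varphi^J$ (possibly with $\varphi^0$ and/or $\bar\varphi^0$) by the Leibniz rule, splitting it into the contribution of $\delbar f_{IJ}$ and the contributions of the $\delbar\varphi^{i}$, $\delbar\bar\varphi^{j}$. The claim is that these two contributions cancel exactly, and that this cancellation is the whole reason for the choice of the exponent in $f_{IJ}$. Here I read $c_{IJ}$ as the sum of the $\lambda_i$ over the indices in $I$ and in $J$.

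First, I will handle the $(0,1)$ derivative of the coefficient. Write $\Re(\bar\tau w)=\tfrac12(\bar\tau w+\tau\bar w)$. Then
\[
\delbar f_{IJ}=-\ii c_{IJ}\,\frac{\tau}{2\Im(\tau)}\,f_{IJ}\,\bar\varphi^0=-c_{IJ}\,\frac{\tau}{\tau-\bar\tau}\,f_{IJ}\,\bar\varphi^0,
\]
using $\tau-\bar\tau=2\ii\Im(\tau)$.

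Next, I will compute the derivative of the form part from the structure equations. Since $\delbar\varphi^i=\lambda_i\frac{\tau}{\tau-\bar\tau}\varphi^i\wedge\bar\varphi^0$, each factor $\varphi^i$ contributes $\lambda_i\frac{\tau}{\tau-\bar\tau}\,\bar\varphi^0\wedge(\cdots)$ after moving $\bar\varphi^0$ to the front. The signs from this reordering must be tracked carefully. Similarly, since $\delbar\bar\varphi^j=-\lambda_j\frac{\tau}{\tau-\bar\tau}\bar\varphi^0\wedge\bar\varphi^j=\lambda_j\frac{\tau}{\tau-\bar\tau}\bar\varphi^j\wedge\bar\varphi^0$, each $\bar\varphi^j$ has the same shape. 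Moving $\bar\varphi^0$ past the preceding one-forms, each term becomes $\lambda_k\frac{\tau}{\tau-\bar\tau}\,\bar\varphi^0\wedge\varphi^I\wedge\bar\varphi^J$ with a common sign. Summing gives
\[
\delbar\pa{\varphi^I\wedge\bar\varphi^J}=c_{IJ}\frac{\tau}{\tau-\bar\tau}\,\bar\varphi^0\wedge\varphi^I\wedge\bar\varphi^J,
\]
in agreement with the computation in Proposition \ref{prop: trivial canonical}. Adding $\delbar f_{IJ}\wedge\varphi^I\wedge\bar\varphi^J$ then gives zero. Since $\delbar\varphi^0=\delbar\bar\varphi^0=0$, inserting $\varphi^0$ just shifts signs uniformly. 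Inserting $\bar\varphi^0$ makes every term contain $\bar\varphi^0\wedge\bar\varphi^0=0$. So all four types of generators are $\delbar$-closed.

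This shows the differential on $B^{p,\bullet}$ vanishes. Then Kasuya's result \cite[Corollary 4.2]{Kasuya} (that $B^{\bullet,\bullet}\hookrightarrow\Lambda^{\bullet,\bullet}N$ induces an isomorphism in $\delbar$-cohomology) gives $H^{p,q}_{\delbar}(N)\simeq H^q(B^{p,\bullet},0)=B^{p,q}$. The main obstacle I expect is the sign bookkeeping in the middle step. I must check that every $\lambda$ enters with the same sign when $\bar\varphi^0$ is moved to the front, both for the $\varphi^i$ and the $\bar\varphi^j$ factors, so that the sum is exactly $c_{IJ}$ and not a difference. If the signs came out differently for the conjugate factors, the correct exponent would involve $\lambda_I-\lambda_J$. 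In that case I would re-read the definition of $c_{IJ}$ accordingly, since the cancellation must match Kasuya's characters.
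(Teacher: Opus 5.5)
Your route is the paper's own: cancel $\delbar f_{IJ}$ against $f_{IJ}\,\delbar\pa{\varphi^I\wedge\bar{\varphi}^J}$, pass to the generators containing $\varphi^0$ or $\bar{\varphi}^0$, and conclude with \cite[Corollary 4.2]{Kasuya}. The conclusion is right, but both of your intermediate formulas have the wrong sign. Your cancellation only works because the two errors compensate each other.

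First, $2\Im(\tau) = -\ii(\tau-\bar{\tau})$, so $-\ii c_{IJ}\frac{\tau}{2\Im(\tau)} = +c_{IJ}\frac{\tau}{\tau-\bar{\tau}}$. Hence $\delbar f_{IJ} = c_{IJ}\frac{\tau}{\tau-\bar{\tau}} f_{IJ}\,\bar{\varphi}^0$, as in the paper. As a check, for $\tau = \ii$ one has $f_{IJ} = e^{-\ii c_{IJ}\Im(w)}$ and $\partial_{\bar{w}} f_{IJ} = \frac{c_{IJ}}{2} f_{IJ}$.

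Second, the structure equations you quote are correct, but $\varphi^i\wedge\bar{\varphi}^0 = -\bar{\varphi}^0\wedge\varphi^i$. The Leibniz sign $(-1)^{k-1}$ of the $k$-th factor exactly cancels the sign of moving $\bar{\varphi}^0$ past the $k-1$ preceding one-forms, so your ``common sign'' is $-1$ and
\[\delbar\pa{\varphi^I\wedge\bar{\varphi}^J} = -c_{IJ}\frac{\tau}{\tau-\bar{\tau}}\,\bar{\varphi}^0\wedge\varphi^I\wedge\bar{\varphi}^J.\]
Your appeal to Proposition \ref{prop: trivial canonical} as a check actually exposes this. Since $\delbar\pa{\varphi^0\wedge\eta} = -\varphi^0\wedge\delbar\eta$, your sign would produce $-\frac{\tau}{\tau-\bar{\tau}}\pa{\sum_i\lambda_i}\varphi^0\wedge\bar{\varphi}^0\wedge\varphi^1\wedge\ldots\wedge\varphi^n$, the opposite of the expression displayed there.

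I recommend writing a verification that needs no sign bookkeeping instead. Since $D_\tau$ is real, $\bar{\varphi}^j = e^{-\lambda_j D_\tau(w)}\,d\bar{z}_j$ carries the same factor as $\varphi^j$, and
\[f_{IJ}\,\varphi^I\wedge\bar{\varphi}^J = \exp\pa{-\ii c_{IJ}\frac{\Re(\bar{\tau}w)}{\Im(\tau)} - c_{IJ}D_\tau(w)}\,dz_I\wedge d\bar{z}_J = \exp\pa{\frac{2c_{IJ}\bar{\tau}}{\tau-\bar{\tau}}\,w}\,dz_I\wedge d\bar{z}_J.\]
This is a holomorphic function of $w$ times a constant-coefficient form, so it is visibly $\delbar$-closed. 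It also settles your sum-versus-difference worry in favour of the sum $c_{IJ} = \sum_{i\in I}\lambda_i + \sum_{j\in J}\lambda_j$, and it gives Remark \ref{rem: del} for free. The rest of your argument (inserting $\varphi^0$ and $\bar{\varphi}^0$, then invoking Kasuya) is fine as it stands.
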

\begin{proof}
It is easy to see that
\[\delbar f_{IJ} = \frac{\tau}{\tau - \bar{\tau}} c_{IJ} f_{IJ} \bar{\varphi}^0, \qquad \delbar \pa{\varphi^I \wedge \bar{\varphi}^J} = -\frac{\tau}{\tau - \bar{\tau}} c_{IJ} \varphi^I \wedge \bar{\varphi}^J.\]
As a consequence
\[\delbar\pa{f_{IJ} \cdot \varphi^I \wedge \bar{\varphi}^J} = 0,\]
hence
\[\begin{array}{l}
\delbar\pa{f_{IJ}(w) \cdot \varphi^0 \wedge \varphi^I \wedge \bar{\varphi}^J} = 0,\\
\delbar\pa{f_{IJ}(w) \cdot \bar{\varphi}^0 \wedge \varphi^I \wedge \bar{\varphi}^J} = 0,\\
\delbar\pa{f_{IJ}(w) \cdot \varphi^0 \wedge \bar{\varphi}^0 \wedge \varphi^I \wedge \bar{\varphi}^J} = 0.
\end{array}\]
\end{proof}

\begin{rem}\label{rem: del}
A similar computation shows that
\[\begin{array}{l}
\del\pa{f_{IJ}(w) \cdot \varphi^I \wedge \bar{\varphi}^J} = 2 c_{IJ} \frac{\bar{\tau}}{\tau - \bar{\tau}} f_{IJ} \varphi^0 \wedge \varphi^I \wedge \bar{\varphi}^J,\\
\del\pa{f_{IJ}(w) \cdot \varphi^0 \wedge \varphi^I \wedge \bar{\varphi}^J} = 0,\\
\del\pa{f_{IJ}(w) \cdot \bar{\varphi}^0 \wedge \varphi^I \wedge \bar{\varphi}^J} = 2 c_{IJ} \frac{\bar{\tau}}{\tau - \bar{\tau}} f_{IJ} \varphi^0 \wedge \bar{\varphi}^0 \wedge \varphi^I \wedge \bar{\varphi}^J,\\
\del\pa{f_{IJ}(w) \cdot \varphi^0 \wedge \bar{\varphi}^0 \wedge \varphi^I \wedge \bar{\varphi}^J} = 0.
\end{array}\]
\end{rem}

\begin{prop}\label{prop: frolicher}
The Fr\"olicher spectral sequence of a split Nakamura manifold $N$ degenerates at the $E_1$ page if and only if for each pair of multi-indices $I, J$ for which $f_{IJ}(\alpha) = 1$ for every $\alpha \in \Lambda_\tau$ it holds that $c_{IJ} = 0$.
\end{prop}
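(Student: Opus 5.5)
The approach is to use the standard characterization of $E_1$-degeneration by dimensions, $\sum_{p+q=k}\dim H^{p,q}_{\delbar}(N) = b_k(N)$ for every $k$, together with the explicit models now available. On the Dolbeault side, Proposition \ref{prop: iso cohomology} gives $H^{p,q}_{\delbar}(N)\simeq B^{p,q}$ with zero differential. Hence $E_1^{p,q}=B^{p,q}$, and $E_1$-degeneration is equivalent to the vanishing of all higher differentials $d_r$, $r\geq 1$. Since $B^{\bullet,\bullet}$ is a double subcomplex of $\Lambda^{\bullet,\bullet}N$ computing Dolbeault cohomology (Kasuya), the Fr\"olicher spectral sequence of $N$ is isomorphic, from $E_1$ on, to that of the double complex $(B^{\bullet,\bullet},\del,\delbar)$. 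In this double complex $\delbar\equiv 0$, so the spectral sequence is computed directly: $d_1$ is induced by $\del$, and because $\delbar=0$ on $B$ every higher $d_r$ ($r\ge2$) vanishes automatically. Thus degeneration at $E_1$ is equivalent to $\del|_B = 0$.

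Next, Remark \ref{rem: del} shows exactly where $\del$ fails to vanish on $B$. On generators of the types $f_{IJ}\varphi^I\wedge\bar\varphi^J$ and $f_{IJ}\bar\varphi^0\wedge\varphi^I\wedge\bar\varphi^J$, $\del$ multiplies by $2c_{IJ}\frac{\bar\tau}{\tau-\bar\tau}$ and wedges with $\varphi^0$. The other two types are killed by $\del$. The resulting forms $f_{IJ}\varphi^0\wedge\varphi^I\wedge\bar\varphi^J$ are again basis elements of $B$ and are nonzero whenever $0\notin I$, which holds by construction since $I,J\subseteq\{1,\dots,n\}$. Distinct $(I,J)$ give linearly independent images. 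Also $\bar\tau\neq 0$, as $\tau\in\cH$. Hence $\del|_B=0$ if and only if $c_{IJ}=0$ for every admissible pair $(I,J)$, i.e. every pair with $f_{IJ}(\alpha)=1$ for all $\alpha\in\Lambda_\tau$. This gives both directions.

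The main obstacle is the identification of the Fr\"olicher spectral sequence of $N$ with that of the subcomplex $B$. This needs $B^{\bullet,\bullet}$ to be closed under $\del$ as well as $\delbar$, which Remark \ref{rem: del} confirms. It also needs the inclusion $B\hookrightarrow\Lambda^{\bullet,\bullet}N$ to be a morphism of double complexes inducing an isomorphism on $E_1$, which is Kasuya's result. A morphism of filtered complexes that is an isomorphism on $E_1$ is an isomorphism on all later pages, so the comparison holds.

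As a robustness check, or alternative route if one prefers not to invoke the comparison theorem, I would verify the dimension count directly. By Hattori, $b_k$ is computed by the Chevalley--Eilenberg complex. One can show that $\sum_{p+q=k}\dim B^{p,q}$ exceeds $b_k$ exactly by twice the number of degree-$k$ admissible pairs with $c_{IJ}\neq0$: these generators pair off under $\del$ and are not $d$-closed or are exact. This is more computational, so I would use the spectral sequence argument.
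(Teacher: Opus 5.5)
Your argument is correct and is essentially the paper's: $E_1^{p,q}\simeq B^{p,q}$ by Proposition \ref{prop: iso cohomology}, $d_1$ is $\del$ restricted to $B$, and Remark \ref{rem: del} shows that $\del|_B=0$ exactly when $c_{IJ}=0$ for every admissible pair. Your comparison with the spectral sequence of the double complex $(B^{\bullet,\bullet},\del,\delbar)$ is a useful addition, because there $\delbar\equiv 0$ forces $d_r=0$ for all $r\geq 2$, which makes explicit the step the paper leaves implicit (that $d_1=0$ already yields degeneration); the only slip is in your optional dimension check, where the excess over $b_k$ is $N_k+2N_{k-1}+N_{k-2}$, with $N_m$ the number of admissible $(I,J)$ such that $|I|+|J|=m$ and $c_{IJ}\neq 0$, rather than twice a single count.
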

\begin{proof}
The $E_1$ page of the Fr\"olicher spectral sequence is given by $\del: H^{p, q}_{\delbar}(N) \longrightarrow H^{p + 1, q}_{\delbar}(N)$. The result then follows from the isomorphism $H^{p, q}_{\delbar}(N) \simeq B^{p, q}$ of Proposition \ref{prop: iso cohomology} and Remark \ref{rem: del}.
\end{proof}

\begin{cor}\label{cor: dolbeault}
Assume that $N$ is a split Nakamura manifold whose Fr\"olicher spectral sequence degenerates at the $E_1$ page. Then
\begin{enumerate}
\item all the invariant $(p, q)$-forms corresponding to multi-indices $I, J$ such that $c_{IJ} = 0$ are $d$-closed;
\item the Dolbeault cohomology of $N$ can be computed by means of invariant forms, namely
\[H^{p,q}_{\delbar}(N)=\Span\left\{
\begin{aligned}
\varphi^I \wedge \bar{\varphi}^J
&\quad \text{with } |I|=p,\ |J|=q,\ c_{IJ}=0,\\
\varphi^0 \wedge \varphi^I \wedge \bar{\varphi}^J
&\quad \text{with } |I|=p-1,\ |J|=q,\ c_{IJ}=0,\\
\bar{\varphi}^0 \wedge \varphi^I \wedge \bar{\varphi}^J
&\quad \text{with } |I|=p,\ |J|=q-1,\ c_{IJ}=0,\\
\varphi^0 \wedge \bar{\varphi}^0 \wedge \varphi^I \wedge \bar{\varphi}^J
&\quad \text{with } |I|=p-1,\ |J|=q-1,\ c_{IJ}=0
\end{aligned}
\right\}.\]
\end{enumerate}
\end{cor}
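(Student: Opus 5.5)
The plan is to read both items off directly from Proposition \ref{prop: frolicher}, Proposition \ref{prop: iso cohomology} and Remark \ref{rem: del}. First, by Proposition \ref{prop: frolicher}, degeneration at $E_1$ means that every pair of multi-indices $I, J$ appearing in the description of $B^{\bullet,\bullet}$, that is, every pair with $f_{IJ}(\alpha) = 1$ for all $\alpha \in \Lambda_\tau$, has $c_{IJ} = 0$. For such pairs the function $f_{IJ}(w) = e^{-\ii c_{IJ} \Re(\bar{\tau}w)/\Im(\tau)}$ is identically $1$. Conversely, if $c_{IJ} = 0$ then $f_{IJ} \equiv 1$, so the condition $f_{IJ}(\alpha) = 1$ holds trivially. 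Hence the index sets $\set{(I,J) \st f_{IJ}|_{\Lambda_\tau} \equiv 1}$ and $\set{(I,J) \st c_{IJ} = 0}$ coincide, and all generators of $B^{p,q}$ are the invariant forms $\varphi^I \wedge \bar{\varphi}^J$, $\varphi^0 \wedge \varphi^I \wedge \bar{\varphi}^J$, $\bar{\varphi}^0 \wedge \varphi^I \wedge \bar{\varphi}^J$ and $\varphi^0 \wedge \bar{\varphi}^0 \wedge \varphi^I \wedge \bar{\varphi}^J$ with $c_{IJ} = 0$.

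For item (1), take one of these invariant forms with $c_{IJ} = 0$. By the computation in the proof of Proposition \ref{prop: iso cohomology}, $\delbar$ of each of the four types is zero. By Remark \ref{rem: del}, $\del$ of each type is either $0$ or a multiple of $c_{IJ}$, hence also $0$. Since $d = \del + \delbar$, the form is $d$-closed. This uses only $c_{IJ} = 0$, which is why item (1) is stated for all invariant forms with $c_{IJ} = 0$.

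Item (2) then follows at once from $H^{p,q}_{\delbar}(N) \simeq B^{p,q}$ in Proposition \ref{prop: iso cohomology}, together with the description of the generators above. One should also remark that these generators are linearly independent, since they are distinct wedge monomials in a global coframe. So they form a basis of $B^{p,q}$ and in particular span it.

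The only delicate point is the identification of the two index sets. Here one must check that the constraint $f_{IJ}|_{\Lambda_\tau} \equiv 1$ automatically holds when $c_{IJ} = 0$, so that no invariant form with $c_{IJ} = 0$ is omitted. This is immediate from the explicit formula for $f_{IJ}$, so I expect no real obstacle.
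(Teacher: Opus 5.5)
Your proposal is correct and is essentially the argument the paper intends: the corollary has no separate proof and is read off from Proposition \ref{prop: frolicher}, the isomorphism $H^{p,q}_{\delbar}(N)\simeq B^{p,q}$ of Proposition \ref{prop: iso cohomology}, and the vanishing of $\delbar$ and $\del$ from Remark \ref{rem: del}. Your two extra observations are also correct: $c_{IJ}=0$ forces $f_{IJ}\equiv 1$, so no invariant form with $c_{IJ}=0$ is omitted, and item (1) uses only $c_{IJ}=0$, not the degeneration hypothesis.
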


Finally, we observe that, by Proposition \ref{prop: frolicher}, the condition that the Fr\"olicher spectral sequence of $N$ degenerates at the $E_1$ page is equivalent to \cite[Condition 3.3]{Kasuya1}. This leads us to the following result, which is an application of \cite[Theorem 3.4]{Kasuya1}.

\begin{prop}\label{prop: harmonic}
Assume that the Fr\"olicher spectral sequence of a split Nakamura manifold $N$ degenerates at the $E_1$ page. Then $N$ satisfies the $\del\delbar$-Lemma and the Dolbeault cohomology spaces can be viewed as the spaces of harmonic forms for the metric
\[g = dw \tensor d\bar{w} + \sum_{i = 1}^ n e^{-2D_\tau(w)\lambda_i} dz_i \tensor d\bar{z}_i.\]
\end{prop}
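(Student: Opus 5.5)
The plan is to reduce the statement to Kasuya's general result \cite[Theorem 3.4]{Kasuya1} by checking its hypotheses explicitly in our setting. Kasuya's theorem concerns solvmanifolds $\Gamma \backslash (\IC^n \rtimes_\phi \IC)$ where $\phi$ is semisimple and diagonal in a fixed basis, with $\Gamma$ splitting. It states that under \cite[Condition 3.3]{Kasuya1} the $\del\delbar$-Lemma holds and the cohomology is represented by harmonic forms for a suitable left-invariant Hermitian metric. Condition 3.3 requires that each character appearing in $B^{\bullet,\bullet}$ be trivial whenever it is trivial on the lattice.

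\textbf{Step 1: verify Condition 3.3.} For $f_{IJ}$ with $f_{IJ}|_{\Lambda_\tau} \equiv 1$, Proposition \ref{prop: frolicher} gives $c_{IJ} = 0$. Hence $f_{IJ} \equiv 1$ identically, and $B^{\bullet,\bullet}$ is spanned by the invariant forms of Corollary \ref{cor: dolbeault}.

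\textbf{Step 2: identify the metric.} Since $\varphi^0 = dw$ and $\varphi^i = e^{-D_\tau(w)\lambda_i} dz_i$, the metric $g = \sum_{k=0}^n \varphi^k \tensor \bar{\varphi}^k$ is exactly the one given in the statement. It is left $\Gamma$-invariant, and its unitary coframe is $\{\varphi^k\}$, which matches the metric used by Kasuya.

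\textbf{Step 3: check harmonicity and deduce the $\del\delbar$-Lemma.} I would verify directly that each generator $\varphi^K\wedge\bar\varphi^L$ with $c_{IJ}=0$ is harmonic. It is $d$-closed by Corollary \ref{cor: dolbeault}(1). To see it is $d^*$-closed, use $*$: the Hodge star sends it to $\pm\varphi^{K^c}\wedge\bar\varphi^{L^c}$. Here $c$ of the complement equals $-c_{IJ}$, because $\sum\lambda_i = 0$ by Proposition \ref{prop: sum is zero}, so the complement also has vanishing character and is closed. Hence the invariant forms are $\delbar$- and $\del$-harmonic, and $\Delta_{\delbar}=\Delta_\del$ holds on them.

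The $\del\delbar$-Lemma then follows in one of two ways. The first is to cite Kasuya. The second is to use the degeneration at $E_1$ together with the equality $\dim H_{dR}^k = \sum_{p+q=k} \dim H^{p,q}$. That equality itself follows from Hattori's theorem, by comparing the Chevalley–Eilenberg cohomology with the complex decomposed into characters. From there, the symmetry $h^{p,q}=h^{q,p}$, which complex conjugation gives since $c_{IJ}=0$ iff $c_{JI}=0$, yields the $\del\delbar$-Lemma by the Angella–Tomassini inequality.

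\textbf{Main obstacle.} I expect the main difficulty to be the last step: ensuring the de Rham cohomology equals the full sum of the Dolbeault groups. This needs the de Rham analogue of Kasuya's character decomposition. I would first try to invoke \cite[Theorem 3.4]{Kasuya1} directly rather than reprove it.
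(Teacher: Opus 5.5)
Your main line is the paper's own proof. Proposition \ref{prop: frolicher} turns $E_1$-degeneration into \cite[Condition 3.3]{Kasuya1}, and the statement is then \cite[Theorem 3.4]{Kasuya1}. Your direct check in Step 3 is also correct. Since $\sum_i\lambda_i=0$, the Hodge star of $g$ sends an invariant form of vanishing character to a constant multiple of another such form. These forms are therefore $d$- and $d^*$-closed, hence harmonic, and since $B^{p,q}\simeq H^{p,q}_{\delbar}(N)$ they fill out the $\Delta_{\delbar}$-harmonic spaces.

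The alternative you sketch for the $\del\delbar$-Lemma does not work as stated. $E_1$-degeneration together with the numerical symmetry $h^{p,q}=h^{q,p}$ does not give the $\del\delbar$-Lemma. By \cite[Paragraph (5.21)]{DGMS}, what is needed besides $E_1$-degeneration is purity of the Hodge filtration on $H^k_{dR}(N)$, and purity is not a condition on Hodge numbers. The Angella--Tomassini inequality $\sum_{p+q=k}(h^{p,q}_{BC}+h^{p,q}_{A})\geq 2b_k$ cannot fill this in, because it involves Bott--Chern and Aeppli numbers, which you never compute. Also, the equality $b_k=\sum_{p+q=k}h^{p,q}$ that you single out as the main obstacle is not an obstacle: it is equivalent to the $E_1$-degeneration you are assuming.

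To finish without citing Kasuya, use your harmonicity computation, not Hodge numbers.
\begin{itemize}
\item The span $A^k$ of the invariant $k$-forms of vanishing character consists of $\Delta_d$-harmonic forms. A harmonic exact form is zero, so $A^k$ injects into $H^k_{dR}(N)$.
\item By $E_1$-degeneration, $\dim A^k=\sum_{p+q=k}h^{p,q}=b_k$. Hence $H^k_{dR}(N)=\bigoplus_{p+q=k}A^{p,q}$, where $A^{p,q}\simeq H^{p,q}_{\delbar}(N)$ consists of $d$-closed forms and $\overline{A^{p,q}}=A^{q,p}$.
\item This is a pure Hodge decomposition. Then \cite[Paragraph (5.21)]{DGMS}, or \cite[Corollary 8]{ASTT} as in the remark after Proposition \ref{prop: del delbar lemma}, gives the $\del\delbar$-Lemma.
\end{itemize}
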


\begin{rem}\label{rem: balanced metric}
Consider the vector fields
\[V_0 = \partial_w, \qquad V_i = e^{D_\tau(w)\lambda_i} \partial_{z_i}, \quad (i = 1, \ldots, n),\]
which are the dual fields of $\varphi^0, \varphi^1, \ldots, \varphi^n$. The metric in Proposition \ref{prop: harmonic} is the one for which these vector fields define an orthonormal system. It is easy to see that the fundamental $(1, 1)$-form of this metric is
\[\omega_g = \sum_{i = 0}^n \varphi^i \wedge \bar{\varphi}^i,\]
hence that $\omega_g$ is an invariant balanced metric (i.e., $d \omega_g^n = 0$).
\end{rem}

\begin{prop}[$\del\delbar$-Lemma]\label{prop: del delbar lemma}
A split Nakamura manifold $N$ satisfies the $\del\delbar$-Lemma if and only if its Fr\"olicher spectral sequence degenerates at the $E_1$ page.
\end{prop}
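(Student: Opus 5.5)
The equivalence has two directions, and I treat them separately.

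The direction ``degeneration $\Rightarrow$ $\del\delbar$-Lemma'' is exactly the content of Proposition \ref{prop: harmonic}. That proposition was obtained from \cite[Theorem 3.4]{Kasuya1}, because $E_1$-degeneration is equivalent to \cite[Condition 3.3]{Kasuya1} by Proposition \ref{prop: frolicher}. So nothing new is needed here.

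For the converse I use the standard general fact that the $\del\delbar$-Lemma forces degeneration at $E_1$ (Deligne--Griffiths--Morgan--Sullivan). To keep the argument inside the paper, I would also give a direct contrapositive proof via Remark \ref{rem: del}. Suppose the Fr\"olicher spectral sequence does not degenerate at $E_1$. By Proposition \ref{prop: frolicher} there are multi-indices $I, J$ with $f_{IJ}(\alpha) = 1$ for every $\alpha \in \Lambda_\tau$ and $c_{IJ} \neq 0$.

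Take $\eta = f_{IJ}\,\varphi^I \wedge \bar{\varphi}^J$. By Proposition \ref{prop: iso cohomology} it is $\delbar$-closed. By Remark \ref{rem: del},
\[\del \eta = 2 c_{IJ} \tfrac{\bar{\tau}}{\tau - \bar{\tau}}\, \varphi^0 \wedge \eta \neq 0.\]
The form $\del\eta$ is therefore $\del$-exact, $\delbar$-closed and nonzero, and it is a nonzero element of $B^{p+1,q}$. Since $H^{p+1,q}_{\delbar}(N) \simeq B^{p+1,q}$, its Dolbeault class is nonzero, so $\del\eta$ is not $\delbar$-exact. In particular $\del\eta$ is not $\del\delbar$-exact, so the $\del\delbar$-Lemma fails. (One version of the lemma states that $\del$-exact, $\delbar$-closed forms are $\del\delbar$-exact; here $\del\eta$ is also $d$-closed, since $\del^2 = 0$ and $\delbar\del\eta = -\del\delbar\eta = 0$.)

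The one point needing care is that $\del\eta$, viewed as an element of $B^{p+1,q}$, really represents a nonzero Dolbeault class. This is exactly what the isomorphism in Proposition \ref{prop: iso cohomology} provides, because the subcomplex $B^{\bullet,\bullet}$ computes Dolbeault cohomology with zero differential. Hence any nonzero element of $B^{p+1,q}$ is not $\delbar$-exact in $\Lambda^{\bullet,\bullet}N$. Granting that, both directions are immediate.
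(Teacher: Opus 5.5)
Your proposal is correct and follows the paper's proof: sufficiency is Proposition \ref{prop: harmonic} (via \cite[Theorem 3.4]{Kasuya1}), and necessity is the general fact from \cite{DGMS}. Your extra contrapositive argument is also valid. It is the DGMS reasoning made explicit at the level of $d_1$, using Remark \ref{rem: del} and the injectivity of $B^{p+1,q} \to H^{p+1,q}_{\delbar}(N)$, and this suffices here because, by Proposition \ref{prop: frolicher}, any failure of degeneration is already detected by $d_1$.
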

\begin{proof}
By \cite[Paragraph (5.21)]{DGMS}, the degeneration of the Fr\"olicher spectral sequence at the $E_1$ page is a necessary condition for the $\del\delbar$-Lemma to hold. It is also sufficient by Proposition \ref{prop: harmonic}.
\end{proof}

\begin{rem}
Using Proposition \ref{prop: frolicher} it is possible to deduce that the degeneration of the Fr\"olicher spectral sequence at $E_1$ implies the $\del\delbar$-Lemma also using \cite[Corollary 8 and Definition 4]{ASTT}: the two requirements (H1) and (H2) in \cite[Definition 4]{ASTT} are a direct consequence of Corollary \ref{cor: dolbeault}.
\end{rem}

Passing on the $\IZ$-basis $\set{1, \tau}$ for $\Lambda_\tau$ we see that the condition that $f_{IJ}(\alpha) = 1$ for every $\alpha \in \Lambda_\tau$ is equivalent to
\[c_{IJ} \frac{\Re(\tau)}{\Im(\tau)} \in 2\pi\IZ \qquad \text{and} \qquad c_{IJ} \frac{|\tau|^2}{\Im(\tau)} \in 2\pi\IZ.\]

\begin{prop}\label{prop: non rational}
Let $\tau \in \cH$ be such that $\frac{\Re(\tau)}{|\tau|^2} \notin \IQ$. Then the only multi-indices $I, J$ which provides generators for $B^{p, q}$ are those for which $c_{IJ} = 0$. In particular, in this case the Dolbeault cohomology of the split Nakamura manifold $N$ is generated by invariant forms and the $\del\delbar$-Lemma holds.
\end{prop}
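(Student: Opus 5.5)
The plan is to use the arithmetic reformulation stated just before the proposition. Passing to the $\IZ$-basis $\set{1, \tau}$ of $\Lambda_\tau$, the condition $f_{IJ}(\alpha) = 1$ for every $\alpha \in \Lambda_\tau$ is equivalent to
\[c_{IJ} \frac{\Re(\tau)}{\Im(\tau)} \in 2\pi\IZ \qquad \text{and} \qquad c_{IJ} \frac{|\tau|^2}{\Im(\tau)} \in 2\pi\IZ.\]
As a sanity check I would re-derive this from the formula for $f_{IJ}$. For $w = 1$ we have $\Re(\bar\tau) = \Re(\tau)$, and for $w = \tau$ we have $\Re(\bar\tau\tau) = |\tau|^2$. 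So $f_{IJ}(1) = 1$ and $f_{IJ}(\tau) = 1$ say exactly that the two real numbers $c_{IJ}\Re(\tau)/\Im(\tau)$ and $c_{IJ}|\tau|^2/\Im(\tau)$ lie in $2\pi\IZ$. Multiplicativity of $f_{IJ}$ in $w$ then gives the condition on all of $\Lambda_\tau$.

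Now suppose $I, J$ give a generator of $B^{p,q}$ and $c_{IJ} \neq 0$. Then there are integers $k, m$ with
\[c_{IJ}\frac{\Re(\tau)}{\Im(\tau)} = 2\pi k, \qquad c_{IJ}\frac{|\tau|^2}{\Im(\tau)} = 2\pi m.\]
Since $|\tau|^2 > 0$ and $c_{IJ} \neq 0$, we get $m \neq 0$. Dividing the first relation by the second eliminates both $c_{IJ}$ and $\Im(\tau)$, giving
\[\frac{\Re(\tau)}{|\tau|^2} = \frac{k}{m} \in \IQ.\]
This contradicts the hypothesis, so necessarily $c_{IJ} = 0$. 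This division is the only real step; the point to be careful about is that no hidden normalisation (for instance a factor depending on $I, J$) enters the two conditions differently. Since both involve the same scalar $c_{IJ}$, it cancels.

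For the final assertions I would argue as follows.

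\textbf{Invariant generators.} If every admissible pair satisfies $c_{IJ} = 0$, then $f_{IJ} \equiv 1$. So all generators of $B^{p,q}$ listed before Proposition \ref{prop: iso cohomology} are invariant forms $\varphi^I\wedge\bar\varphi^J$, possibly wedged with $\varphi^0$ and/or $\bar\varphi^0$. By Proposition \ref{prop: iso cohomology} they generate $H^{p,q}_{\delbar}(N)$.

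\textbf{Fr\"olicher degeneration.} The criterion of Proposition \ref{prop: frolicher} is then satisfied, so the Fr\"olicher spectral sequence degenerates at the $E_1$ page.

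\textbf{$\del\delbar$-Lemma.} Proposition \ref{prop: del delbar lemma} (or directly Proposition \ref{prop: harmonic}) then yields the $\del\delbar$-Lemma.
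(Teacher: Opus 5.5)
Your proof is correct and follows the paper's argument. Assuming $c_{IJ} \neq 0$, you show that the integer attached to $c_{IJ}\frac{|\tau|^2}{\Im(\tau)}$ is nonzero, and taking the ratio of the two integrality conditions gives $\frac{\Re(\tau)}{|\tau|^2} \in \IQ$, a contradiction. Your check of the reformulation via $f_{IJ}(1)$ and $f_{IJ}(\tau)$, and your derivation of the final assertions from Propositions \ref{prop: iso cohomology}, \ref{prop: frolicher} and \ref{prop: del delbar lemma}, only make explicit steps the paper leaves implicit.
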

\begin{proof}
Assume that $I, J$ are multi-indices for which $c_{IJ} \neq 0$ and assume that
\[c_{IJ} \frac{\Re(\tau)}{\Im(\tau)} = 2h\pi \in 2\pi\IZ \qquad \text{and} \qquad c_{IJ} \frac{|\tau|^2}{\Im(\tau)} = 2k\pi \in 2\pi\IZ\]
for suitable $h, k \in \IZ$. Then $k \neq 0$ and
\[c_{IJ} \frac{\Re(\tau)}{\Im(\tau)} \cdot k = h \cdot 2\pi \cdot k = h \cdot c_{IJ} \frac{|\tau|^2}{\Im(\tau)}.\]
As a consequence
\[\frac{\Re(\tau)}{|\tau|^2} = \frac{h}{k} \in \IQ.\]
\end{proof}

\begin{rem}\label{rem: special tau}
All the complex numbers $\tau \in \cH$ such that $\frac{\Re(\tau)}{|\tau|^2} \in \IQ$ are of the form 
\[\tau = ql^2 + \ii l \sqrt{1 - q^2 l^2}\]
for some $l \in \IR_{> 0}$ and $q \in \IQ$ such that $l|q| < 1$. With this form we have $|\tau| = l$ and $\frac{\Re(\tau)}{|\tau|^2} = q$.
\end{rem}

\subsection{Cohomology with non-invariant forms}

Corollary \ref{cor: dolbeault} allows us to compute explicitly the Dolbeault cohomology and the Hodge numbers of a split Nakamura manifold satisfying the $\del\delbar$-Lemma. In this Section we want to address the complementary situation.

Assume that $N$ is a split Nakamura manifold which does not satisfy the $\del\delbar$-Lemma. By Proposition \ref{prop: non rational} we have that $\frac{\Re(\tau)}{|\tau|^2} \in \IQ$ and, moreover, there must exist multi-indices $I, J$ such that $c_{IJ} \neq 0$ and
\[c_{IJ} \frac{\Re(\tau)}{\Im(\tau)} = 2h\pi \in 2\pi\IZ \qquad \text{and} \qquad c_{IJ} \frac{|\tau|^2}{\Im(\tau)} = 2k\pi \in 2\pi\IZ\]
for suitable $h, k \in \IZ$ such that $k \neq 0$ and $\sgn(c_{IJ}) = \sgn(k)$.

The following proposition shows that these conditions actually determine $\tau$ uniquely.

\begin{lemma}\label{lemma: form of tau}
Let $\tau \in \cH$ be such that
\[\frac{\Re(\tau)}{|\tau|^2} \in \IQ, \qquad c \frac{\Re(\tau)}{\Im(\tau)} = 2h\pi, \qquad c \frac{|\tau|^2}{\Im(\tau)} = 2k\pi\]
for suitable $c \in \IR$ and $h,k \in \IZ$ such that $c \neq 0$, $k \neq 0$ and $kc > 0$. Then
\[\tau = \frac{2k\pi}{4\pi^2 h^2 + c^2} (2h\pi + \ii c).\]
\end{lemma}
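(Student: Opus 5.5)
Write $\tau = a + \ii b$ with $a = \Re(\tau)$ and $b = \Im(\tau) > 0$. The plan is to treat the two hypotheses as equations in the real unknowns $a, b$ and solve them directly. The rationality hypothesis $\frac{\Re(\tau)}{|\tau|^2} \in \IQ$ is automatic from the other two, as in Proposition \ref{prop: non rational}, so it plays no role in the computation. From $c \frac{a}{b} = 2h\pi$ we get $a = \frac{2h\pi}{c} b$, which is legitimate since $c \neq 0$. Substituting this into $c \frac{a^2 + b^2}{b} = 2k\pi$ gives
\[c\, b \left(1 + \frac{4\pi^2 h^2}{c^2}\right) = 2k\pi, \qquad\text{i.e.}\qquad b\,\frac{c^2 + 4\pi^2 h^2}{c} = 2k\pi.\]
Because $b \neq 0$ we may cancel one factor of $b$, which is what makes the second equation linear in $b$.

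Solving, we obtain
\[b = \frac{2k\pi c}{4\pi^2 h^2 + c^2}, \qquad a = \frac{2h\pi}{c}\, b = \frac{2k\pi \cdot 2h\pi}{4\pi^2 h^2 + c^2}.\]
Combining these gives $\tau = a + \ii b = \frac{2k\pi}{4\pi^2h^2 + c^2}(2h\pi + \ii c)$, which is the claimed formula.

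For consistency, $b > 0$ requires $kc > 0$, and this is exactly the hypothesis; it matches the sign condition $\sgn(c_{IJ}) = \sgn(k)$ noted before the lemma. Since $b$ is forced by linear equations once $c, h, k$ are fixed, $\tau$ is uniquely determined. There is no real obstacle here beyond justifying the divisions by $c$ and $b$ and checking that the sign of $b$ is compatible with $\tau \in \cH$. The denominator $4\pi^2h^2 + c^2$ is nonzero because $c \neq 0$.
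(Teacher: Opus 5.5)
Your proof is correct, but it reaches the formula by a different route from the paper's.

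\textbf{The paper's route.} It invokes Remark~\ref{rem: special tau} to write $\tau = ql^2 + \ii l\sqrt{1-q^2l^2}$ with $q \in \IQ$, $l > 0$ and $|q|l < 1$; this is where the rationality hypothesis enters. It then divides the two equations to obtain $q = h/k$, squares to get $l = 2\pi|k|/\sqrt{4\pi^2h^2+c^2}$, and verifies the admissibility condition $|q|l<1$. Recovering $\Im(\tau) = l\sqrt{1-q^2l^2}$ with the correct sign then tacitly uses $kc>0$ to remove the absolute values.

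\textbf{Your route.} You work directly with $a = \Re(\tau)$ and $b = \Im(\tau)$. The first equation makes $a$ linear in $b$, and after cancelling one factor of $b$ the second equation is linear in $b$. So you need no square roots, no absolute values and no admissibility check.

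\textbf{Comparison.} Your argument is shorter, and it shows that two hypotheses are consequences of the others. The condition $\Re(\tau)/|\tau|^2 \in \IQ$ holds automatically, since this ratio equals $h/k$. The condition $kc>0$ is forced as well, because $|\tau|^2/\Im(\tau) > 0$ means $c$ and $k$ must have the same sign. The paper's version mainly serves to tie the lemma to the parametrization of Remark~\ref{rem: special tau}, from which $q = h/k$ and $l = |\tau|$ can be read off directly.
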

\begin{proof}
By Remark \ref{rem: special tau} we have that $\tau$ is of the form
\[\tau = ql^2 + \ii l \sqrt{1 - q^2 l^2}\]
for some $q \in \IQ$ and $l \in \IR_{> 0}$ such that $|q|l < 1$. We have then to solve the system in $q, l$
\[\left\{ \begin{array}{l}
c\frac{ql}{\sqrt{1 - q^2 l^2}} = 2h\pi\\
c\frac{l}{\sqrt{1 - q^2 l^2}} = 2k\pi.
\end{array} \right.\]
It follows that $\frac{\Re(\tau)}{|\tau|^2} = q = \frac{h}{k}$, and substituting this in the second equation we find that $l = \frac{2\pi|k|}{\sqrt{4 \pi^2 h^2 + c^2}}$. We observe that
\[|q|l = \frac{|h|}{|k|} \cdot \frac{2\pi |k|}{\sqrt{4\pi^2 h^2 + c^2}} = \frac{2\pi |h|}{\sqrt{4\pi^2 h^2 + c^2}} < \frac{2\pi |h|}{\sqrt{4\pi^2 h^2}} = 1\]
and the result follows.
\end{proof}

It is possible that different choices of $c, h, k$ as in Lemma \ref{lemma: form of tau} give rise to the same value of $\tau$. We want to characterize these different choices.

Let
\[D = \set{(c, h, k) \in \IR \times \IZ \times \IZ \st c \neq 0, k \neq 0, ck > 0}\]
and define the function
\[\begin{array}{rccl}
\tau: & D & \longrightarrow & \cH\\
 & (c, h, k) & \longmapsto & \frac{2k\pi}{4\pi^2 h^2 + c^2} (2h\pi + \ii c)
\end{array}\]
Fix $\tau_0 = \tau(c_0, h_0, k_0)$: we want to describe the fibre of $\tau$ over $\tau_0$.

\begin{lemma}\label{lemma: preimages of tau}
Let $\tau_0 = \tau(c_0, h_0, k_0)$ for some $(c_0, h_0, k_0) \in D$. For $(c, h, k) \in D$ we have that $\tau(c, h, k) = \tau_0$ if and only if $(c, h, k)$ and $(c_0, h_0, k_0)$ are linearly dependent over $\IQ$.
\end{lemma}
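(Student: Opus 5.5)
The plan is to show that the map $\tau$ on $D$ is constant exactly along real rays through the origin, and then to use the integrality of $h,k$ to see that real proportionality forces rational proportionality. Both $(c,h,k)$ and $(c_0,h_0,k_0)$ are nonzero, since $k,k_0\neq 0$. So linear dependence over $\IQ$ (or over $\IR$) just means $(c,h,k)=t\,(c_0,h_0,k_0)$ for some nonzero scalar $t$.

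For the ``if'' direction I would check that $\tau$ is homogeneous of degree $0$ under real rescaling. If $t\in\IR\smallsetminus\{0\}$, then $(tc,th,tk)$ still lies in $D$, because $(tc)(tk)=t^2ck>0$. The scaling is
\[
\frac{2tk\pi}{t^2(4\pi^2h^2+c^2)}\,t\,(2h\pi+\ii c)=\tau(c,h,k).
\]
In particular this holds for any rational $t$, and negative $t$ is allowed.

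For the ``only if'' direction I would recover the ratios from $\tau$ itself, as in the proof of Lemma \ref{lemma: form of tau}. A direct computation from the formula gives
\[
|\tau(c,h,k)|^2=\frac{4\pi^2k^2}{4\pi^2h^2+c^2},\qquad \Im\tau(c,h,k)=\frac{2\pi kc}{4\pi^2h^2+c^2},\qquad \Re\tau(c,h,k)=\frac{4\pi^2hk}{4\pi^2h^2+c^2}.
\]
From these one reads off
\[
\frac{\Re\tau}{|\tau|^2}=\frac{h}{k},\qquad \frac{|\tau|^2}{\Im\tau}=\frac{2\pi k}{c}.
\]
Suppose $\tau(c,h,k)=\tau_0$. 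Then $h/k=h_0/k_0$ and $k/c=k_0/c_0$. Setting $t=k/k_0$, we get $k=tk_0$, $h=th_0$ and $c=tc_0$, so $(c,h,k)=t\,(c_0,h_0,k_0)$. Since $k,k_0\in\IZ\smallsetminus\{0\}$, we have $t=k/k_0\in\IQ^\times$, which gives the required rational dependence.

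The only delicate point is bookkeeping rather than a real obstacle. One must make sure that $D$ is stable under the rescaling, including by negative $t$, which is why the condition $ck>0$ (rather than $c,k>0$) matters. One must also note that real proportionality collapses to rational proportionality precisely because the $k$-coordinates are nonzero integers.
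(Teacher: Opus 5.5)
Your proof is correct and follows the paper's argument. You read off $h/k$ and $c/k$ from $\Re\tau/|\tau|^2$ and $|\tau|^2/\Im\tau$; the paper uses $\Im\tau/|\tau|^2$, which is equivalent. You then conclude $(c,h,k)=\frac{k}{k_0}(c_0,h_0,k_0)$ with $k/k_0\in\IQ$, and your degree-$0$ homogeneity check simply writes out the direction the paper calls trivial.
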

\begin{proof}
Assume that $\tau(c, h, k) = \tau_0$. Then
\[\frac{c}{k} = 2\pi \frac{\Im(\tau_0)}{|\tau_0|^2} = \frac{c_0}{k_0}, \qquad \frac{h}{k} = \frac{\Re(\tau_0)}{|\tau_0|^2} = \frac{h_0}{k_0}.\]
As a consequence $(c, h, k)$ and $(c_0, h_0, k_0)$ are linearly dependent over $\IQ$, in fact $(c, h, k) = \frac{k}{k_0} (c_0, h_0, k_0)$. The other implication is trivial.
\end{proof}

\begin{rem}\label{rem: different c, h, k}
Fix $(c, h, k) \in D$ and let $d = \gcd(h, k)$. Let $(c_0, h_0, k_0) = \frac{1}{d}(c, h, k)$: then $(c_0, h_0, k_0) \in D$ and by Lemma \ref{lemma: form of tau}
\[\tau(c, h, k) = \tau(c_0, h_0, k_0) = \tau_0.\]
Moreover, by Lemma \ref{lemma: form of tau} again, for every triple $(\tilde{c}, \tilde{h}, \tilde{k}) \in D$ such that $\tau(\tilde{c}, \tilde{h}, \tilde{k}) = \tau_0$ there exists $m \in \IZ$ such that
\[(\tilde{c}, \tilde{h}, \tilde{k}) = m(c_0, h_0, k_0) = \frac{m}{d}(c, h, k).\]
\end{rem}

We can now go back to our geometric setting. Assume that we are dealing with a split Nakamura manifold for which the $\del\delbar$-Lemma does not hold. As we have already noticed, we have $\frac{\Re(\tau)}{|\tau|^2} \in \IQ$ and there exist  multi-indices $I, J$ such that $c_{IJ} \neq 0$ and
\[c_{IJ} \frac{\Re(\tau)}{\Im(\tau)} = 2h \pi, \qquad c_{IJ} \frac{|\tau|^2}{\Im(\tau)} = 2k \pi\]
for suitable $h, k \in \IZ$ such that $k \neq 0$ and $c_{IJ}k > 0$. It follows by Remark \ref{rem: different c, h, k} that all the multi-indices $H, K$ such that
\[c_{HK} \frac{\Re(\tau)}{\Im(\tau)}, c_{HK} \frac{|\tau|^2}{\Im(\tau)} \in 2 \pi \IZ,\]
i.e., the multi-indices which correspond to generators of some $B^{p, q}$, are all those for which there exists $m \in \IZ$ such that
\[c_{HK} = \frac{m}{\gcd(h, k)} c_{IJ}.\]

\section{\texorpdfstring{$p$}{p}-K\"ahlerianity of Nakamura manifolds}\label{sect: p-kahlerianity}

Recall that a complex manifold $X$ is $p$-K\"ahler (for some $1 \leq p \leq \dim(X)$) if it admits a closed transverse $(p, p)$-form, namely, a closed real $(p, p)$-form $\Omega$ such that
\[\ii^{p^2} 2^{-p} \Omega \wedge \psi \wedge \bar{\psi}\]
is a strictly positive multiple of a volume form for every non-zero simple $(n - p, 0)$-form $\psi$ (see, e.g., \cite[$\S$5]{Alessandrini-Bassanelli}).

\begin{rem}
Nakamura manifolds admit a balanced metric by Proposition \ref{prop: harmonic} and Remark \ref{rem: balanced metric} (), hence all Nakamura manifolds $N = \Gamma \backslash G$ with $G = \IC^n \rtimes_{\rho_\tau} \IC$ are $n$-K\"ahler. On the other hand, as no Nakamura manifold is K\"ahler unless it is a torus, no Nakamura manifolds except tori are $1$-K\"ahler.
\end{rem}

\begin{prop}[\texorpdfstring{$p$}{p}-K\"ahlerianity]\label{prop: p-kahlerianity}
Let $N$ be a Nakamura manifold, corresponding to parameters $\lambda_1, \ldots, \lambda_n \in \IR$ and $\tau \in \cH$. Assume that $N$ is not a torus, i.e., that $\lambda_i \neq 0$ for some $i$. Then $N$ is not $p$-K\"ahler for any $p \in \set{1, \ldots, n - 1}$.
\end{prop}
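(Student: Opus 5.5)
The plan is to apply the standard obstruction to $p$-K\"ahlerianity from Alessandrini--Bassanelli: a compact complex manifold $X$ of dimension $m$ is \emph{not} $p$-K\"ahler whenever there is a $(m-p, m-p)$-current (or smooth form) $T$ that is strongly positive and non-zero, and that is $d$-exact, or more generally a $(m-p,m-p)$-component of a boundary. Indeed, pairing a closed transverse $(p,p)$-form $\Omega$ with $T = d S$ gives $\int_X \Omega \wedge T = \pm\int_X d\Omega \wedge S = 0$. On the other hand, transversality of $\Omega$ together with positivity of $T \neq 0$ forces $\int_X \Omega\wedge T > 0$. Here $m = n+1$. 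Fix $p \in \set{1, \ldots, n-1}$, so that $m - p = n + 1 - p \geq 2$.

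The test forms will be built from the invariant coframe. Choose an index $i$ with $\lambda_i \neq 0$. By Proposition \ref{prop: sum is zero} there is also an index $j \neq i$ with $\lambda_j \neq 0$. We may assume $\lambda_i > 0$; the case $\lambda_i < 0$ is symmetric, using $\lambda_j$ or a sign change. The basic computation comes from the structure equations: with $\theta = \frac{1}{\tau - \bar\tau}(\bar\tau\varphi^0 - \tau\bar\varphi^0)$, a real multiple of $\ii$ times a real closed $1$-form, we have $d\varphi^i = \lambda_i\,\theta\wedge\varphi^i$. Hence
\[
d\pa{\varphi^i\wedge\bar\varphi^i\wedge\Xi} = 2\lambda_i\,\Re\theta\wedge\varphi^i\wedge\bar\varphi^i\wedge\Xi + \ldots ,
\]
and the exact form we are after should be a multiple of $\varphi^0\wedge\bar\varphi^0\wedge \varphi^K\wedge\bar\varphi^K$ with $|K| = n - p$.

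The candidate is
\[
T = \ii^{(n+1-p)^2}\,\varphi^0\wedge\bar\varphi^0\wedge\varphi^K\wedge\bar\varphi^K,
\]
where $K$ is a multi-index of length $n-p \geq 1$ with $c_{KK} = 2\sum_{k\in K}\lambda_k \neq 0$. Such a $K$ exists: if every $(n-p)$-subsum of the $\lambda$'s vanished, then, since $n-p < n$, exchanging one element of a subset for an outside one would force all the $\lambda_k$ to be equal, and then Proposition \ref{prop: sum is zero} would give $\lambda_k = 0$ for all $k$, contradicting the hypothesis. The form $T$ is a product of positive $(1,1)$-forms $\ii\varphi^k\wedge\bar\varphi^k$, so it is strongly positive and nonzero.

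Exactness of $T$ comes from $S = \varphi^K\wedge\bar\varphi^K\wedge\eta$, with $\eta$ a suitable real combination of $\varphi^0$ and $\bar\varphi^0$. We have $d(\varphi^K\wedge\bar\varphi^K) = c_{KK}\,\Re\theta\wedge\varphi^K\wedge\bar\varphi^K$ with $\Re\theta \neq 0$ a real closed $1$-form. We then wedge with a closed $1$-form $\eta$ independent of $\Re\theta$, for instance $\eta = \Im$-type combination of $dw$, so that $\Re\theta\wedge\eta$ is a nonzero multiple of $\ii\varphi^0\wedge\bar\varphi^0$. This gives $d(\varphi^K\wedge\bar\varphi^K\wedge\eta) = c\,\varphi^0\wedge\bar\varphi^0\wedge\varphi^K\wedge\bar\varphi^K$ with $c \neq 0$, so $T$ is exact up to a nonzero constant. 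All forms involved are $\Gamma$-invariant, so they are defined on $N$; this argument does not even need the split assumption.

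The main obstacle is checking the sign and positivity in $\int_N\Omega\wedge T > 0$, that is, that $T$ is genuinely strongly positive of bidegree $(n+1-p, n+1-p)$ and that transversality of $\Omega$ applies. Transversality is stated against simple forms $\psi\wedge\bar\psi$, and $T$ is exactly $\ii^{q^2}\psi\wedge\bar\psi$ with $\psi = \varphi^0\wedge\varphi^K$ simple, $q = n+1-p$. So the pairing is a positive multiple of $\int_N \mathrm{vol} > 0$, while Stokes gives $0$. This contradiction shows that $N$ is not $p$-K\"ahler.
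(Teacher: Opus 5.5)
Your proposal is correct and is essentially the paper's argument: the paper cites \cite[Lemma 4.3]{Cattaneo-Tomassini} for the multi-index $K$ with $\sum_{k\in K}\lambda_k\neq 0$ (which you reprove by the exchange argument) and uses the complex primitive $\varphi^0\wedge\varphi^K\wedge\bar\varphi^K$ instead of your real $\varphi^K\wedge\bar\varphi^K\wedge\eta$, then concludes by the same Stokes-versus-transversality contradiction applied to $\theta\wedge\bar\theta$ with $\theta=\varphi^0\wedge\varphi^K$. One slip to fix: $\theta=-dD_\tau(w)$ is itself a real closed $1$-form, not $\ii$ times a real one as you wrote, and this is exactly why $\Re\theta=\theta\neq 0$, which your exactness step needs.
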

\begin{proof}
Fix $p \in \set{1, \ldots, n}$. As $N$ is not a torus, by \cite[Lemma 4.3]{Cattaneo-Tomassini} there exists a multi-index $I$ of length $n - p$ such that $\sum_{i_j \in I} i_j \neq 0$. Observe that
\[\begin{array}{c}
d\pa{\varphi^0 \wedge \varphi^{i_1} \wedge \bar{\varphi}^{i_1} \wedge \ldots \wedge \varphi^{i_{n - p}} \wedge \bar{\varphi}^{i_{n - p}}} =\\
= \frac{\tau}{\tau - \bar{\tau}} c_{II} \varphi^0 \wedge \bar{\varphi}^0 \wedge \varphi^{i_1} \wedge \bar{\varphi}^{i_1} \wedge \ldots \wedge \varphi^{i_{n - p}} \wedge \bar{\varphi}^{i_{n - p}}.
\end{array}\]
Let $\theta = \varphi^0 \wedge \varphi^I$: it follows that $\ii^{p^2} 2^{-p} \theta \wedge \bar{\theta}$ is a non-zero simple $d$-exact $(n - p, n - p)$-form on $N$, hence that $N$ can not be $p$-K\"ahler.

Indeed, if $\Omega$ were a closed transverse $(p, p)$-form, then (compare with \cite[Proposition 4.4]{Cattaneo-Tomassini}) the quantity $\int_M \ii^{p^2} 2^{-p} \Omega \wedge \theta \wedge \bar{\theta}$ on one hand is strictly positive by the transversality of $\Omega$, while on the other hand it vanishes by Stokes' Theorem.
\end{proof}

\section{Deformations of Nakamura manifolds}\label{sect: deformations}

Let $N$ be a split Nakamura manifold. Since its canonical bundle is trivial (cf.\ Proposition \ref{prop: trivial canonical}) it follows from Serre duality that
\[H^1(N, TN) \simeq H^n\pa{N, \Omega^1_N}^* \simeq \pa{H^{1, n}_{\delbar}(N)}^*.\]

By Proposition \ref{prop: iso cohomology} we can easily compute the Hodge number $h^{1, n}_{\delbar}(N)$:
\begin{enumerate}
\item the form $f \cdot \varphi^0 \wedge \bar{\varphi}^1 \wedge \ldots \wedge \bar{\varphi}^n$ always provides a generator for $H^{1, n}_{\delbar}(N)$ as in this case $f(w) \equiv 1$ by Proposition \ref{prop: sum is zero}.
\item the form $f_j \cdot \varphi^0 \wedge \bar{\varphi}^0 \wedge \bar{\varphi}^1 \wedge \ldots \wedge \widehat{\bar{\varphi}^j} \wedge \ldots \wedge \bar{\varphi}^n$ has $f_j(w) = e^{\ii \lambda_j \frac{\Re(\bar{\tau} w)}{\Im(\tau)}}$ and so it provides a generator for $H^{1, n}_{\delbar}(N)$ if and only if $-\lambda_j \frac{\Re(\tau)}{\Im(\tau)}, -\lambda_j \frac{|\tau|^2}{\Im(\tau)} \in 2\pi \IZ$.
\item the form $f_i \cdot \varphi^i \wedge \bar{\varphi}^1 \ldots \wedge \bar{\varphi}^n$ has $f_i(w) = e^{-\ii \lambda_i \frac{\Re(\bar{\tau} w)}{\Im(\tau)}}$ and so it provides a generator for $H^{1, n}_{\delbar}(N)$ if and only if $\lambda_i \frac{\Re(\tau)}{\Im(\tau)}, \lambda_i \frac{|\tau|^2}{\Im(\tau)} \in 2\pi \IZ$.
\item the form $f_{ij} \cdot \varphi^0 \wedge \bar{\varphi}^0 \wedge \bar{\varphi}^1 \wedge \ldots \wedge \widehat{\bar{\varphi}^i} \wedge \ldots \wedge \bar{\varphi}^n$ has $f_{ij}(w) = e^{-\ii (\lambda_i - \lambda_j) \frac{\Re(\bar{\tau} w)}{\Im(\tau)}}$ and so it provides a generator for $H^{1, n}_{\delbar}(N)$ if and only if $(\lambda_i - \lambda_j) \frac{\Re(\tau)}{\Im(\tau)}, (\lambda_i - \lambda_j) \frac{|\tau|^2}{\Im(\tau)} \in 2\pi \IZ$.
\end{enumerate}
As a consequence
\begin{equation}\label{eq: h^(1, n)}
\begin{array}{rl}
h^{1. n}_{\delbar}(N) = & 1 + 2 \cdot \#\set{i \st \lambda_i \frac{\Re(\tau)}{\Im(\tau)}, \lambda_i \frac{|\tau|^2}{\Im(\tau)} \in 2\pi \IZ} +\\
 & + \#\set{(i, j) \st (\lambda_i - \lambda_j) \frac{\Re(\tau)}{\Im(\tau)}, (\lambda_i - \lambda_j) \frac{|\tau|^2}{\Im(\tau)} \in 2\pi \IZ} =\\
= & 1 + 2 \cdot \#\set{i \st \lambda_i = 0} + \#\set{(i, j) \st \lambda_i = \lambda_j} +\\
 & + 2 \cdot \#\set{i \st \lambda_i \neq 0 \text{ and } \lambda_i \frac{\Re(\tau)}{\Im(\tau)}, \lambda_i \frac{|\tau|^2}{\Im(\tau)} \in 2\pi \IZ} +\\
 & + \#\set{(i, j) \st \lambda_i \neq \lambda_j \text{ and } (\lambda_i - \lambda_j) \frac{\Re(\tau)}{\Im(\tau)}, (\lambda_i - \lambda_j) \frac{|\tau|^2}{\Im(\tau)} \in 2\pi \IZ} =\\
= & 1 + n + 2 \cdot \#\set{i \st \lambda_i = 0} + 2 \cdot \#\set{(i, j) \st i < j \text{ and } \lambda_i = \lambda_j} +\\
 & + 2 \cdot \#\set{i \st \lambda_i \neq 0 \text{ and } \lambda_i \frac{\Re(\tau)}{\Im(\tau)}, \lambda_i \frac{|\tau|^2}{\Im(\tau)} \in 2\pi \IZ} +\\
 & + \#\set{(i, j) \st \lambda_i \neq \lambda_j \text{ and } (\lambda_i - \lambda_j) \frac{\Re(\tau)}{\Im(\tau)}, (\lambda_i - \lambda_j) \frac{|\tau|^2}{\Im(\tau)} \in 2\pi \IZ}.
\end{array}
\end{equation}

\begin{prop}
Let $N$ be a split Nakamura manifold satisfying the $\del\delbar$-Lemma. Then $N$ has unobstructed deformations and
\[\dim\pa{\Def(N)} = 1 + n + 2 \cdot \#\set{i \st \lambda_i = 0} + 2 \cdot \#\set{(i, j) \st i < j \text{ and } \lambda_i = \lambda_j}.\]
\end{prop}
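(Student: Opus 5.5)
Two facts are needed: unobstructedness, and the dimension count for $H^1(N, TN)$.

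\emph{Unobstructedness.} By Proposition \ref{prop: trivial canonical} the canonical bundle of $N$ is holomorphically trivial. By hypothesis $N$ satisfies the $\del\delbar$-Lemma. A compact complex manifold with these two properties has unobstructed deformations; this is the Tian--Todorov lemma in the form for $\del\delbar$-manifolds (see e.g.\ the results of Tian, Todorov and their extension to $\del\delbar$-manifolds with trivial canonical bundle). So the Kuranishi space is smooth, and $\dim(\Def(N)) = h^1(N, TN)$.

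Concretely, one contracts with the holomorphic volume form $\psi$ to identify $TN$-valued $(0,q)$-forms with $(n,q)$-forms. One then solves the Maurer--Cartan equation order by order. The $\del\delbar$-Lemma makes the obstruction class, which is $\del$-exact and $\delbar$-closed, also $\delbar$-exact.

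\emph{Dimension count.} By the Serre duality isomorphism above, $h^1(N, TN) = h^{1,n}_{\delbar}(N)$. I would use formula \eqref{eq: h^(1, n)}. Since the $\del\delbar$-Lemma holds, Propositions \ref{prop: del delbar lemma} and \ref{prop: frolicher} imply that every pair of multi-indices $(I, J)$ with $f_{IJ} \equiv 1$ on $\Lambda_\tau$ has $c_{IJ} = 0$.

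The two extra terms in \eqref{eq: h^(1, n)} count exactly such pairs with nonzero $c_{IJ}$:
\begin{itemize}
\item indices $i$ with $\lambda_i \neq 0$ and $\lambda_i \frac{\Re(\tau)}{\Im(\tau)}, \lambda_i \frac{|\tau|^2}{\Im(\tau)} \in 2\pi\IZ$, where the relevant $c$ is $\pm\lambda_i$ (cases (2) and (3) of the enumeration);
\item pairs $(i,j)$ with $\lambda_i \neq \lambda_j$ satisfying the analogous condition, where $c = \lambda_i - \lambda_j$ (case (4)).
\end{itemize}
Hence both terms vanish, and what remains is $1 + n + 2\#\set{i \st \lambda_i = 0} + 2\#\set{(i,j) \st i<j,\ \lambda_i = \lambda_j}$.

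The only care needed is that the exponents in cases (2)--(4) really are $c_{IJ}$ for the corresponding multi-indices. Then the Fr\"olicher criterion applies to exactly these generators, which I would check by reading off $c_{IJ}$ from the definition of $f_{IJ}$.

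The main obstacle is the unobstructedness step. It needs a precise reference for the Tian--Todorov argument in the non-K\"ahler $\del\delbar$ setting with trivial canonical bundle. If no citable version is available, the fallback is to run the Maurer--Cartan iteration directly. In that iteration the key identity is the Tian--Todorov formula $[\mu,\nu]\lrcorner\psi = \del$-exact for $\del$-closed $\mu\lrcorner\psi$ and $\nu\lrcorner\psi$, combined with the $\del\delbar$-Lemma to make each obstruction $\delbar$-exact. Alternatively, the invariant-forms description of Corollary \ref{cor: dolbeault} may allow the iteration to be done explicitly.
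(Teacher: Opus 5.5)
Your proposal is correct and follows the paper's proof essentially verbatim. The paper likewise invokes the Tian--Todorov-type unobstructedness for $\del\delbar$-manifolds with trivial canonical bundle to get that the Kuranishi space has dimension $h^{1,n}_{\delbar}(N)$. It then observes that the extra terms in \eqref{eq: h^(1, n)} vanish, citing Corollary \ref{cor: dolbeault}, which is the same fact you extract from Propositions \ref{prop: del delbar lemma} and \ref{prop: frolicher}; your remarks on the symmetry $c \mapsto -c$ in cases (2)--(3) and on the Maurer--Cartan iteration are accurate extra detail the paper leaves implicit.
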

\begin{proof}
Since $N$ satisfies the $\del\delbar$-Lemma and $\omega_N \simeq \cO_N$ we have that it has unobstructed deformations and $\dim\pa{\Def(N)} = h^{1, n}_{\delbar}(N)$. The computation for this Hodge number made in \eqref{eq: h^(1, n)} simplifies to the one in the statement by Corollary \ref{cor: dolbeault}.
\end{proof}

\section{The Albanese morphism}\label{sect: albanese}

In this Section we provide a general Lemma on the Albanese map, then we focus on the case of Nakamura manifolds.

\begin{prop}\label{prop: albanese general}
Let $X$ be a compact connected complex manifold and $B$ a complex torus. Assume that there is a holomorphic map $\pi: X \longrightarrow B$ such that every $b \in B$ has an open neighborhood $U$ on which $\pi|_{\pi^{-1(U)}}: \pi^{-1}(U) \longrightarrow U$ is isomorphic to $U \times E \longrightarrow U$ for a suitable connected complex manifold $E$. If $\dim(B) \geq \dim\pa{H^0\pa{\pa{X, \Omega^1_X}}}$, then $\pi$ is the Albanese map of $X$ and $\dim(B) = \dim\pa{H^0\pa{\pa{X, \Omega^1_X}}}$.
\end{prop}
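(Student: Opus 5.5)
The plan is to use the universal property of the Albanese map. Let $\alpha: X \longrightarrow \Alb(X)$ be the Albanese map. Since $B$ is a complex torus, $\pi$ factors uniquely as
\[\pi = \phi \circ \alpha,\]
where $\phi: \Alb(X) \longrightarrow B$ is a holomorphic map, and we may take it to be a homomorphism of tori up to translation. Recall that $\dim \Alb(X) \leq \dim H^0(X, \Omega^1_X)$. Equality holds when $X$ is Kähler, but in general the Albanese torus is $H^0(X, d\text{-closed }1\text{-forms})^*$ modulo the closure of the image of $H_1(X,\IZ)$. In any case there is an injective pullback on holomorphic $1$-forms, and this is the inequality I will use.

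\textbf{Step 1: $\pi^*$ is injective on holomorphic $1$-forms.} Because $\pi$ is locally trivial with connected fibres it is a surjective submersion, so $\pi^*: H^0(B, \Omega^1_B) \longrightarrow H^0(X, \Omega^1_X)$ is injective. Indeed, if $\pi^*\eta = 0$ then $\eta$ vanishes on $d\pi(T_xX) = T_{\pi(x)}B$ at every point. This gives $\dim(B) \leq h^0(\Omega^1_X)$, so the hypothesis forces equality and
\[\pi^*: H^0(B, \Omega^1_B) \longrightarrow H^0(X, \Omega^1_X)\]
is an isomorphism. In particular every holomorphic $1$-form on $X$ is closed, being a pullback of a constant form on $B$. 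The Albanese torus is therefore $H^0(X,\Omega^1_X)^*$ modulo the closure of the periods of $H_1(X,\IZ)$.

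\textbf{Step 2: the map $\phi$ is an isomorphism.} Since $\pi^*$ is an isomorphism on $1$-forms and these are dual to the tangent spaces at the origin, the differential $d\phi$ is an isomorphism of Lie algebras. Hence $\phi$ is an isogeny onto $B$, and the periods of $H_1(X,\IZ)$ are already discrete. It remains to show that $\phi$ is injective, i.e.\ that $\pi_*: H_1(X,\IZ) \longrightarrow H_1(B,\IZ)$ is surjective. Here I use the fibre bundle structure. Local triviality with connected fibre $E$ gives the homotopy exact sequence
\[\pi_1(X) \longrightarrow \pi_1(B) \longrightarrow \pi_0(E) = 0,\]
so $\pi_*$ is surjective on $\pi_1$, hence on $H_1$. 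The image of $H_1(X,\IZ)$ in $H^0(X,\Omega^1_X)^* \simeq H^0(B,\Omega^1_B)^*$ is therefore exactly the lattice of $B$. This identifies $\Alb(X) = B$, with $\alpha = \pi$ up to translation.

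\textbf{Main obstacle.} The delicate point is the identification of $\Alb(X)$ for non-Kähler $X$, namely fixing the right definition. I would use the one via closed holomorphic $1$-forms, or equivalently the universal torus. Under that definition I need to check that the factorization $\phi$ is the linear map induced by $\pi^*$ on forms; this follows from $\alpha$ being the integration map $x \mapsto \int_{x_0}^x$. Once that is in place, Steps 1 and 2 are routine.
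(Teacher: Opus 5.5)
Your proof is correct, but it takes a different route from the paper's.

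The paper treats the Albanese functor as a black box:
\begin{enumerate}
\item Blanchard's exact sequence $\operatorname{Alb}(E) \to \operatorname{Alb}(X) \to B \to 0$ gives $\dim \operatorname{Alb}(X) \geq \dim(B)$.
\item Ueno's inequality $\dim \operatorname{Alb}(X) \leq \dim H^0(X, \Omega^1_X)$, combined with the hypothesis, forces all these dimensions to be equal.
\item Hence the kernel of $\operatorname{Alb}(X) \to B$ is finite. It is then trivial because it is the image of the connected group $\operatorname{Alb}(E)$.
\end{enumerate}

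You replace each of these ingredients by something more elementary. You get the dimension equality from the fact that $\pi$ is a submersion, so $\pi^*$ is injective on holomorphic $1$-forms. In place of the connectedness of $\operatorname{Alb}(E)$, you use the surjectivity of $\pi_1(X) \to \pi_1(B)$, which comes from the homotopy sequence of a bundle with connected fibre. This identifies the period group of $X$ with the lattice of $B$.

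What each approach buys:
\begin{itemize}
\item Your argument is more self-contained. It needs only the period description of the Albanese torus of a non-K\"ahler manifold and the injectivity of the pullback of invariant forms along the Albanese map, not Blanchard's theorem.
\item It also gives extra information: every holomorphic $1$-form on $X$ is pulled back from $B$, and hence is $d$-closed.
\item The paper's argument is shorter and never has to unwind the construction of $\operatorname{Alb}(X)$.
\end{itemize}

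A minor remark: once you know the periods of $X$ are exactly the lattice of $B$, the isogeny step in your Step 2 is redundant. The period description then directly gives $\operatorname{Alb}(X) \cong B$, with the Albanese map equal to $\pi$ up to translation.
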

\begin{proof}
By \cite[Proposition 1.2.2]{Blanchard} there is an exact sequence
\[\Alb(E) \xrightarrow{i_*} \Alb(X) \xrightarrow{\pi_*} B \longrightarrow 0\]
of complex Lie groups. From this sequence and \cite[Lemma 9.22]{Ueno} we deduce that
\[\dim(B) \geq \dim\pa{H^0\pa{X, \Omega^1_X}} \geq \dim\pa{\Alb(X)} \geq \dim(B)\]
and so all these inequalities are equalities.

Let $K = \ker(\pi_*)$. As $\dim(\Alb(X)) = \dim(B)$ we have that $K$ is finite, and we have the two exact sequences
\begin{enumerate}
\item $\Alb(E) \xrightarrow{i_*} K \longrightarrow 0$, from which $K = i_*(\Alb(E)) = \set{0}$ as $\Alb(E)$ is connected;
\item $0 \longrightarrow K \longrightarrow \Alb(X) \xrightarrow{\pi_*} B \longrightarrow 0$ from which we deduce that $\pi_*$ is an isomorphism.
\end{enumerate}
\end{proof}

Let now $N$ be a split Nakamura manifold, say $N = \Gamma \backslash G$ with $\Gamma = \Gamma' \ltimes_\rho \Lambda_\tau$. As the projection to the last factor $G = \IC^n \rtimes_\rho \IC \longrightarrow \IC$ is equivariant with respect to the action by left translations by elements of $\Gamma$ on $G$ and translations by elements of $\Lambda_\tau$ on $\IC$ there is a well defined holomorphic map on the quotients:
\[\pi: N \longrightarrow B = \IC / \Lambda_\tau,\]
where $B$ is a complex $1$-dimensional torus. We can then compute by Proposition \ref{prop: iso cohomology} that
\[\begin{array}{rl}
h^{1, 0}_{\delbar}(N) = & 1 + \#\set{i \st \lambda_i \frac{\Re(\tau)}{\Im(\tau)}, \lambda_i \frac{|\tau|^2}{\Im(\tau)} \in 2\pi\IZ} =\\
= &  1 + \#\set{i \st \lambda_i = 0} + \#\set{i \st \lambda_i \neq 0, \lambda_i \frac{\Re(\tau)}{\Im(\tau)}, \lambda_i \frac{|\tau|^2}{\Im(\tau)} \in 2\pi\IZ}.
\end{array}\]

As a consequence, if $h^{1, 0}_{\delbar}(N) = 1$ (which happens, e.g., if $N$ satisfies the $\del\delbar$-Lemma and $\lambda_i \neq 0$ for every $i$) then by Proposition \ref{prop: albanese general} we conclude that $\pi: N \longrightarrow B$ is the Albanese morphism of $N$.

\section{Automorphisms of split Nakamura manifolds}\label{sect: automorphisms}
In this Section we want to describe automorphisms of split Nakamura manifolds. Identify $\pi_1(N) = \Gamma$ with the group of deck transformations of the universal cover $G$ of $N$ via
\[\Gamma \simeq \set{L_{(\beta, \alpha)}: G \longrightarrow G \st (\beta, \alpha) \in \Gamma},\]
then we have a canonical identification
\[\Aut(N) \simeq \frac{N_{\Aut(\IC^2)}(\Gamma)}{\Gamma},\]
where
\[N_{\Aut(\IC^2)}(\Gamma) = \set{F \in \Aut(\IC^2) \st F \circ \Gamma \circ F^{-1} = \Gamma}\]
is the normalizer of $\Gamma$ in $\Aut(\IC^2)$.

Let $f \in \Aut(N)$ be any automorphism; as the quotient $\IC^{n + 1} \longrightarrow N$ is the universal cover of $N$, it is naturally induced an automorphism $F \in \Aut(\IC^{n + 1})$ such that the diagram
\[\xymatrix{\IC^{n + 1} \ar[r]^F \ar[d] & \IC^{n + 1} \ar[d]\\
N \ar[r]^f & N}\]
commutes. Moreover, the lift $F$ normalizes $\Gamma$, hence for every $(\beta, \alpha) \in \Gamma$ there exists $(\beta', \alpha') \in \Gamma$ such that
\begin{equation}\label{eq: F normalizes}
F \circ L_{(\beta, \alpha)} = L_{(\alpha', \beta')} \circ F.
\end{equation}

Write $F = \left( \begin{array}{c} H\\ \hline h \end{array} \right)$, with $H: \IC^{n + 1} \longrightarrow \IC^n$ and $h: \IC^{n + 1} \rightarrow \IC$ holomorphic, then the differential of \eqref{eq: F normalizes} is
\begin{equation}\label{eq: diff normalizing}
\left( \begin{array}{c|c}
\frac{\partial H}{\partial z} & \frac{\partial H}{\partial w}\\
\hline
\frac{\partial h}{\partial z} & \frac{\partial h}{\partial w}
\end{array} \right)_{|_{L_{(\beta, \alpha)}(z, w)}} \cdot \left( \begin{array}{c|c}
\rho(\alpha) & 0\\
\hline
0 & 1
\end{array} \right) = \left( \begin{array}{c|c}
\rho(\alpha') & 0\\
\hline
0 & 1
\end{array} \right) \cdot \left( \begin{array}{c|c}
\frac{\partial H}{\partial z} & \frac{\partial H}{\partial w}\\
\hline
\frac{\partial h}{\partial z} & \frac{\partial h}{\partial w}
\end{array} \right).
\end{equation}
In order to have information on $F$, we analyze the entries of this matrix separately.

As $\frac{\partial h}{\partial w}(L_{(\beta, \alpha)}(z, w)) = \frac{\partial h}{\partial w}(z, w)$, we deduce that $\frac{\partial h}{\partial w}$ descends to a function on $N$. Since $N$ is compact, this function must be constant and so
\[h(z, w) = h_1(z) + tw\]
for suitable $t \in \IC$ and $h_1: \IC^{n} \longrightarrow \IC$ holomorphic.

From \eqref{eq: diff normalizing} again, we therefore deduce that
\begin{equation}\label{eq: diff h_1}
\frac{\partial h_1}{\partial z}(\beta + \rho_\tau(\alpha) \cdot z) \cdot \rho_\tau(\alpha) = \frac{\partial h_1}{\partial z}(z).
\end{equation}
Since this must hold for every $(\beta, \alpha) \in \Gamma$, in particular it must be true for the elements of the form $(\beta, 0) \in \Gamma$, for which $\rho(0) = \id$. Hence $\frac{\partial h_1}{\partial z}$ descends to a function on the complex torus $\IC^n / \Gamma'$ and so $\frac{\partial h_1}{\partial z} = (s_1, \ldots, s_n)$ is a constant row vector. As a consequence, \eqref{eq: diff h_1} reduces to
\[(s_1, \ldots, s_n) \cdot \rho(\alpha) = (s_1, \ldots, s_n) \qquad \forall \alpha \in \Lambda_\tau,\]
i.e., $(s_1, \ldots, s_n)^T \in \bigcap_{\alpha \in \Lambda_\tau} \Fix(\rho(\alpha))$. Thank to the description of these fixed locus in Section \ref{sect: faithfulness and freedom} we easily deduce the following result.

\begin{prop}
Assume that $\lambda_i \neq 0$ for every $i = 1, \ldots, n$. Then
\[h(z, w) = tw + \sigma\]
for suitable $t, \sigma \in \IC$ such that $t \cdot \Lambda_\tau \subseteq \Lambda_\tau$.
\end{prop}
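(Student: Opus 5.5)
We continue the analysis started just before the statement. We already know that $h(z, w) = h_1(z) + tw$, with $\frac{\partial h_1}{\partial z} = (s_1, \ldots, s_n)$ constant and $(s_1, \ldots, s_n)^T \in \Fix(\rho(\alpha))$ for every $\alpha \in \Lambda_\tau$. (Here we use that $\rho(\alpha)$ is diagonal, so the row condition $s\rho(\alpha) = s$ is the same as $\rho(\alpha)s^T = s^T$.)

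The first step is to choose $\alpha = 1 \in \Lambda_\tau$. Then $D_\tau(1) = 1$, so $1 \notin \tau \cdot \IR = \ker(\rho_\tau)$. Since not all $\lambda_i$ vanish, $\rho_\tau$ is non-trivial, and we may apply Proposition \ref{prop: freedom}; in fact its Corollary applies directly, because every $\lambda_i \neq 0$. It gives $\Fix(\rho(1)) = \set{0}$, hence $s_1 = \ldots = s_n = 0$. A holomorphic function on $\IC^n$ with vanishing differential is constant, so $h_1 \equiv \sigma$ for some $\sigma \in \IC$, and therefore $h(z, w) = tw + \sigma$.

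The remaining claim is $t \cdot \Lambda_\tau \subseteq \Lambda_\tau$. For this we look at the last component of \eqref{eq: F normalizes} rather than its differential. Given $(\beta, \alpha) \in \Gamma$, let $(\beta', \alpha') \in \Gamma$ be the element with $F \circ L_{(\beta, \alpha)} = L_{(\beta', \alpha')} \circ F$. The second coordinate of $L_{(\beta, \alpha)}(z, w)$ is $\alpha + w$, and left translation by $(\beta', \alpha')$ adds $\alpha'$ to the second coordinate. Comparing second components gives
\[t(\alpha + w) + \sigma = \alpha' + tw + \sigma,\]
that is, $t\alpha = \alpha'$. Because $\Gamma = \Gamma' \rtimes_{\rho_\tau} \Lambda_\tau$ is split, the second component $\alpha'$ lies in $\Lambda_\tau$. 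Every $\alpha \in \Lambda_\tau$ occurs, for instance through $(0, \alpha) \in \Gamma$. This gives $t\Lambda_\tau \subseteq \Lambda_\tau$.

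None of these steps is a real obstacle. The only point that needs care is checking that $\Lambda_\tau$ contains an element outside $\ker(\rho_\tau)$, which is what makes the fixed locus trivial. This is exactly where the hypothesis that all $\lambda_i$ are nonzero is used; without it, Proposition \ref{prop: freedom} would leave the $s_i$ with $\lambda_i = 0$ free. One could add that $t \neq 0$ because $F$ is invertible, but the statement does not require it.
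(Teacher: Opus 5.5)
Your proof is correct and follows the paper's argument. The fixed-locus description kills $\partial h_1/\partial z$; you make the paper's implicit step explicit by testing with $\alpha = 1$, where $D_\tau(1) = 1$ and $\rho_\tau(1) = \Delta$ has no eigenvalue $1$. Comparing the last components of the normalizing relation then gives $\alpha' = t\alpha \in \Lambda_\tau$, with $(0,\alpha) \in \Gamma$ supplied by the split hypothesis.
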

\begin{proof}
The discussion in this Section allows us to conclude that $h$ has the prescribed form, so we only have to show that $t \cdot \Lambda_\tau \subseteq \Lambda_\tau$. The last component of the evaluation of \eqref{eq: F normalizes} in $(z, w)$ is
\[t(\alpha + w) + \sigma = \alpha' + tw + \sigma,\]
so we deduce that
\begin{equation}\label{eq: a'}
\alpha' = t \cdot \alpha \in \Lambda_\tau
\end{equation}
as desired.
\end{proof}

Assume from now on that $\lambda_i \neq 0$ for every $i = 1, \ldots, n$.

We deduce from \eqref{eq: diff normalizing} that
\[\frac{\partial H}{\partial w}(L_{(\beta, \alpha)}(z, w)) = \rho(\alpha') \cdot \frac{\partial H}{\partial w}(z, w),\]
and, in particular, this implies that for the elements of the form $(\beta, 0) \in \Gamma$ that
\[\frac{\partial H}{\partial w}(\beta + z, w) = \frac{\partial H}{\partial w}(z, w).\]
In fact, it follows from \eqref{eq: a'} that the element $(\beta', \alpha') \in \Gamma$ satisfying \eqref{eq: F normalizes} must have $\alpha' = 0$ whenever $\alpha = 0$. From this last equality we deduce that $\frac{\partial H}{\partial w}$ does not depend on $z$, but just on $w$. As a consequence
\[H(z, w) = H_1(z) +H_2(w).\]

With this information, we deduce from \eqref{eq: diff normalizing} that then
\[\frac{\partial H_1}{\partial z}(\beta + \rho(\alpha)z) \cdot \rho(\alpha) = \rho(\alpha') \cdot \frac{\partial H_1}{\partial z}(z),\]
so by the same argument as before we deduce that $\frac{\partial H_1}{\partial z} = A$ is constant. Hence $H$ is of the form
\[H(z, w) = Az + H_2(w)\]
for a suitable $A \in \operatorname{Mat}(n, n; \IC)$ and $H_2: \IC \longrightarrow \IC^n$ holomorphic.

\begin{prop}\label{prop: form of lift}
Let $f \in \Aut(N)$, where $N$ is a split Nakamura manifold with $\lambda_i \neq 0$ for every $i = 1, \ldots, n$. Let $F: \IC^{n + 1} \longrightarrow \IC^{n + 1}$ be a lift of $f$ to the universal cover. Then $F$ is of the form
\[F(z, w) = (Az + H_2(w), tz + \sigma)\]
for suitable $t, \sigma \in \IC$, $A \in \operatorname{Mat}(n, n; \IC)$ and $H_2: \IC \longrightarrow \IC^n$ holomorphic such that $t \cdot \Lambda_\tau \subseteq \Lambda_\tau$, $A \cdot \Gamma' \subseteq \Gamma'$ and
\begin{equation}\label{eq: relations}
\rho(t\alpha) \cdot A = A \cdot \rho(\alpha), \qquad \rho(t\alpha) \cdot \frac{\partial H_2}{\partial w}(w) = \frac{\partial H_2}{\partial w}(\alpha + w).
\end{equation}
\end{prop}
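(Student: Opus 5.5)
Most of the work is already done in the discussion preceding the statement; the plan is to collect those deductions and then extract the remaining conditions from the non-differentiated identity \eqref{eq: F normalizes}. (The second component in the statement, written $tz+\sigma$, should be read as $tw+\sigma$, in accordance with the previous proposition.)

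\emph{Step 1: the shape of $F$.} By the previous proposition (which uses $\lambda_i\neq 0$ for all $i$), $h(z,w)=tw+\sigma$ with $t\cdot\Lambda_\tau\subseteq\Lambda_\tau$. Moreover, \eqref{eq: a'} gives $\alpha'=t\alpha$. The argument after that proposition then shows $\frac{\partial H}{\partial w}$ is $\Gamma'$-periodic in $z$, hence independent of $z$. Applying the same periodicity argument to $\frac{\partial H_1}{\partial z}$, as in the treatment of $h_1$, gives $H(z,w)=Az+H_2(w)$ with $A$ constant. So $F(z,w)=(Az+H_2(w),\,tw+\sigma)$.

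\emph{Step 2: the relations \eqref{eq: relations}.} Substitute this form into \eqref{eq: diff normalizing}, using $\rho(\alpha')=\rho(t\alpha)$.
\begin{itemize}
\item The $(1,1)$ block reads $A\,\rho(\alpha)=\rho(t\alpha)A$. This is the first relation.
\item The $(1,2)$ block reads $\frac{\partial H_2}{\partial w}(\alpha+w)=\rho(t\alpha)\frac{\partial H_2}{\partial w}(w)$, since the derivative is evaluated at $L_{(\beta,\alpha)}(z,w)$, whose $w$-coordinate is $\alpha+w$. This is the second relation.
\end{itemize}

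\emph{Step 3: $A\cdot\Gamma'\subseteq\Gamma'$.} Take $\alpha=0$ in \eqref{eq: F normalizes}, so $\alpha'=0$. The first component of \eqref{eq: F normalizes} evaluated at $(z,w)$ is
\[A(\beta+z)+H_2(w)=\beta'+Az+H_2(w).\]
Hence $\beta'=A\beta$, and $\beta'\in\Gamma'$ because $(\beta',0)\in\Gamma$ and $\Gamma=\Gamma'\rtimes\Lambda_\tau$ is split.

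\emph{Main obstacle.} The only delicate point is rigor in Step 1. First, the conjugating element $(\beta',\alpha')$ must be shown to depend only on $(\beta,\alpha)$ and not on $(z,w)$; this holds by discreteness of $\Gamma$ and continuity. Second, one must invoke compactness of the torus $\IC^n/\Gamma'$ correctly, so that a holomorphic $\Gamma'$-periodic function of $z$ (depending holomorphically on the parameter $w$) is constant in $z$. Everything else is bookkeeping of block entries.
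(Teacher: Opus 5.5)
Your proposal is correct and follows the paper's proof essentially step by step. You collect the shape $F(z,w)=(Az+H_2(w),\,tw+\sigma)$ from the preceding discussion, read the two relations in \eqref{eq: relations} off the blocks of \eqref{eq: diff normalizing} using $\alpha'=t\alpha$, and obtain $A\cdot\Gamma'\subseteq\Gamma'$ by evaluating \eqref{eq: F normalizes} on elements $(\beta,0)$; your reading of $tz+\sigma$ as $tw+\sigma$ is also the one the paper's own proof uses.
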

\begin{proof}
The fact that $f$ lifts to a map of the stated form follows from the discussion in this section.

We show that $A$ preserves $\Gamma'$. Recall that for $(\beta, \alpha)$ of the form $(\beta, 0)$ the corresponding $(\beta', \alpha')$ satisfying \eqref{eq: F normalizes} is still of the form $(\beta', 0)$. This implies that
\[\begin{array}{rl}
(A(\beta + z) + H_2(w), tw + \sigma) = & F(L_{(\beta, 0)}(z, w)) =\\
= & L_{(\beta', 0)}(F(z, w)) =\\
= & (\beta' + Az + H_2(w), tw + \sigma)
\end{array}\]
and so we deduce that $A \cdot \Gamma' \subseteq \Gamma'$.

The two equalities in \eqref{eq: relations} are the specialization of \eqref{eq: diff normalizing} when $F$ is of the form $F(z, w) = (Az + H_2(w), tw + \sigma)$.
\end{proof}

\begin{prop}
Let $F(z, w) = (Az + H_2(w), tw + \sigma)$ be the lift to $\IC^{n + 1}$ of an automorphism $f \in \Aut(N)$, as in Proposition \ref{prop: form of lift}. Then $A$ and $t$ are invertible.
\end{prop}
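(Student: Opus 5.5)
Since $F$ is a lift of the automorphism $f$, it is a biholomorphism of $\IC^{n + 1}$. I would use that, together with the triangular shape of $F$ from Proposition \ref{prop: form of lift}, to show that both diagonal blocks of the Jacobian are invertible. Concretely, $F(z, w) = (Az + H_2(w), tw + \sigma)$ has Jacobian
\[
JF(z, w) = \left( \begin{array}{c|c}
A & \frac{\partial H_2}{\partial w}(w)\\
\hline
0 & t
\end{array} \right),
\]
which is block upper triangular. Hence $\det JF(z, w) = t \cdot \det(A)$. Because $F$ is a biholomorphism, its Jacobian determinant vanishes nowhere. Therefore $t \neq 0$ and $\det(A) \neq 0$, so $A$ and $t$ are invertible.

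It is also worth recording that the inverses have the right integrality properties, since the next results will presumably need them. The lift of $f^{-1}$ can be taken to be $F^{-1}$, which also normalizes $\Gamma$. By Proposition \ref{prop: form of lift} applied to $f^{-1}$, this lift has the form $(A'z + H_2'(w), t'w + \sigma')$ with $t' \Lambda_\tau \subseteq \Lambda_\tau$ and $A' \Gamma' \subseteq \Gamma'$. Composing $F^{-1} \circ F = \id$ and comparing the linear parts gives $t' t = 1$ and $A' A = \id$. Consequently $t \Lambda_\tau = \Lambda_\tau$ and $A \Gamma' = \Gamma'$. In particular $t$ is a unit of the order $\set{u \st u \Lambda_\tau \subseteq \Lambda_\tau}$, and $A$ restricts to an automorphism of the lattice $\Gamma'$.

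The main step that needs care is justifying that $F$ is globally invertible. If "lift" only guarantees a holomorphic map $F$ covering $f$, one can lift $f^{-1}$ to some $G'$. Then $G' \circ F$ covers the identity, so it is a deck transformation $L_{(\beta, \alpha)}$, which is invertible. Hence $F$ is injective with a holomorphic left inverse; the same argument for $F \circ G'$ gives a right inverse, so $F$ is a biholomorphism. Once this is in place, the Jacobian computation above is routine.
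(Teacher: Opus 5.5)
Your proof is correct, but it takes a different route from the paper's.

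\textbf{Your route.} You argue infinitesimally. Since $F$ is a biholomorphism, its block-triangular Jacobian has nowhere-vanishing determinant $t\det(A)$, so $t \neq 0$ and $\det(A) \neq 0$. In the paper's setup $F$ is taken in $\Aut(\IC^{n+1})$ from the start, so your third paragraph is a correct but optional justification.

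\textbf{The paper's route.} The paper argues globally. It takes a lift $F'(z,w) = (A'z + H_2'(w), t'w + \sigma')$ of $f^{-1}$. Since $F' \circ F$ covers $\id_N$, it equals a deck transformation $L_{(\beta,\alpha)}$. Comparing coefficients gives $t't = 1$ and $A'A = \rho_\tau(\alpha)$. The latter is invertible; the paper invokes $\det \rho_\tau(\alpha) = 1$, though invertibility of a diagonal matrix of exponentials is all that is needed.

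\textbf{Comparison.} Your argument is shorter. It uses neither the shape of the lift of $f^{-1}$ nor any property of $\rho_\tau$. The paper's argument instead produces an explicit relation between the parameters of $F$ and those of a lift of $f^{-1}$.

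Your third paragraph already contains the paper's key step, namely that $G' \circ F$ is a deck transformation. Reading off the $z$- and $w$-linear parts there would have given the paper's proof directly.

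Your second paragraph chooses the lift of $f^{-1}$ to be exactly $F^{-1}$, so that $A'A$ is the identity. This immediately yields $A\Gamma' = \Gamma'$, which the paper only obtains at the end of the proof of its main theorem.
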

\begin{proof}
Let $F'(z, w) = (A'z + H'_2(z), t'w + \sigma')$ be a lift for $f^{-1}$. As $F' \circ F$ lifts $\id_N$ we have that $F' \circ F \in \Gamma$, namely, there exist $(\beta, \alpha) \in \Gamma$ such that $F' \circ F = L_{(\beta, \alpha)}$. This means that
\[(A'Az + A' \cdot H_2(w) + H_2'(tw + \sigma), t't w + t'\sigma + \sigma') = (\beta + \rho(\alpha) \cdot z, \alpha + w),\]
from which we deduce that
\begin{enumerate}
\item $t't = 1$, hence that $t$ is invertible;
\item $A'A = \rho_\tau(\alpha) = \Delta^{D_\tau(\alpha)}$; hence $\det(A'A) = \det(\Delta^{D_\tau(\alpha)}) = 1$ and $A$ is invertible.
\end{enumerate}
\end{proof}

\begin{cor}
Let $F$ be as in Proposition \ref{prop: form of lift}. Then
\[t \in \set{1, -1} \qquad \text{and} \qquad \Delta^t A = A \Delta.\]
\end{cor}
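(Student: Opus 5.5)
The plan is to read off everything from the first relation in \eqref{eq: relations},
\[\rho_\tau(t\alpha) \cdot A = A \cdot \rho_\tau(\alpha) \qquad \forall \alpha \in \Lambda_\tau,\]
evaluated on the two generators $1$ and $\tau$ of $\Lambda_\tau$. We also use the invertibility of $A$ and $t$ from the previous proposition, and the description of $\ker(\rho_\tau)$ in Proposition \ref{prop: faithfulness}. Throughout, $\lambda_i \neq 0$ for every $i$, so we are in the case $\ker(\rho_\tau) = \tau \cdot \IR$.

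First I show that $t$ is real. Take $\alpha = \tau$. Since $D_\tau(\tau) = 0$, we have $\rho_\tau(\tau) = \id$, so the relation becomes $\rho_\tau(t\tau) \cdot A = A$. Because $A$ is invertible, this forces $\rho_\tau(t\tau) = \id$, i.e.\ $t\tau \in \ker(\rho_\tau) = \tau \cdot \IR$. Hence $t \in \IR$.

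Next I show $t \in \set{1, -1}$, using integrality. From $t \cdot \Lambda_\tau \subseteq \Lambda_\tau$ we get $t = t \cdot 1 \in \Lambda_\tau = \IZ \oplus \tau\IZ$. Since $t$ is real and $\tau \notin \IR$, this gives $t \in \IZ$. In the proof of the previous proposition, the lift $F'$ of $f^{-1}$ has a coefficient $t'$ with $t't = 1$. Applying Proposition \ref{prop: form of lift} to $f^{-1}$ gives $t' \cdot \Lambda_\tau \subseteq \Lambda_\tau$, and the same argument (via the invertibility of $A'$) shows $t' \in \IZ$. Two integers with product $1$ satisfy $t = t' = \pm 1$.

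Finally I derive the commutation relation. Take $\alpha = 1$. Since $D_\tau(1) = 1$, we have $\rho_\tau(1) = \Delta$. Moreover $D_\tau$ is $\IR$-linear and $t$ is a real integer, so $D_\tau(t) = t$ and $\rho_\tau(t) = \Delta^{t}$. The relation then reads $\Delta^t A = A \Delta$, as claimed.

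The only delicate point is justifying that $t'$ is also an integer. This needs Proposition \ref{prop: form of lift} applied to $f^{-1}$ together with the reality argument for $t'$. As an alternative that uses only $t$, one can note that $t^{-1} \in \IQ$ also preserves $\Lambda_\tau$, because $t \cdot \Lambda_\tau$ has finite index and equality is forced by invertibility of the lifted map. In either form this step is immediate.
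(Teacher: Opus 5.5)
Your proof is correct and follows the paper's argument: evaluate the first relation in \eqref{eq: relations} at $\alpha = \tau$ to get $t \in \IR$, hence $t \in \Lambda_\tau \cap \IR = \IZ$, and evaluate it at $\alpha = 1$ to get $\Delta^t A = A\Delta$. Your use of $t't = 1$ with $t' \in \IZ$, coming from the lift of $f^{-1}$, simply spells out what the paper means by ``as it is invertible''.
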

\begin{proof}
Consider the first equation in \eqref{eq: relations} for $\alpha = \tau$: in this case $\rho(\tau) = \id$ and since $A$ is invertible we deduce that $\rho(t\tau) = \id$. Hence $t\tau \in \ker(\rho) = \Span_{\IR}\set{\tau}$ and we deduce that $t \in \IR$. As a consequence
\[t \in \Lambda_\tau \cap \IR = \IZ\]
and so $t \in \set{1, -1}$ as it is invertible.

Finally, the first equation in \eqref{eq: relations} for $\alpha = 1$ is $\rho(t) A = A \rho(1)$ which is the second equality in the statement.
\end{proof}

We can then state the main theorem of this section.

\begin{thm}
Let $N$ be a split Nakamura manifold, corresponding to parameters $\lambda_1, \ldots, \lambda_n \in \IR$ and $\tau \in \cH$. Assume that none of the $\lambda_i$ vanishes. Let $f \in \Aut(N)$ be any automorphism and lift it to $F: \IC^{n + 1} \longrightarrow \IC^{n + 1}$. Then $F$ is of the form
\[F(z, w) = (Az + E(w) + h, tw + \sigma)\]
where $t \in \set{1, -1}$, $A \in \GL(n, \IC)$, $h \in \IC^n$ and $\sigma \in \IC$ satisfy $\Delta^t A = A \Delta$, $A \cdot \Gamma' = \Gamma'$ and $(\id - \Delta)h \in \Gamma'$ and, finally, $E: \IC \longrightarrow \IC^n$ is a holomorphic function such that $\rho_\tau(t \alpha) \cdot E(w) = E(w + \alpha)$ for every $\alpha \in \Lambda_\tau$ (which implies that $E(w) \equiv 0$ if $\frac{\Re(\tau)}{|\tau|^2} \notin \IQ$).\\
Vice versa, every function $F: \IC^{n + 1} \longrightarrow \IC^{n + 1}$ as above descends to an automorphism of $N$.\\
Moreover, we have $F \circ L_{(\beta, \alpha)} = L_{(\beta', \alpha')} \circ F$ for every $(\beta, \alpha) \in \Gamma$ with
\[(\beta', \alpha') = (A \beta + (\id_{\IC^n} - \rho_\tau(t\alpha))h), t \alpha).\]
\end{thm}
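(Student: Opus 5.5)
Starting from Proposition \ref{prop: form of lift}, every lift has the shape $F(z,w) = (Az + H_2(w), tw+\sigma)$ with $t\Lambda_\tau\subseteq\Lambda_\tau$ and $A\Gamma'\subseteq\Gamma'$. The subsequent results give $A$ invertible, $t\in\{1,-1\}$ and $\Delta^tA = A\Delta$. It remains to analyse the $w$-dependent term $H_2$ and the translation part. I would evaluate the normalizing condition \eqref{eq: F normalizes} directly, not only its differential \eqref{eq: diff normalizing}. Writing $L_{(\beta',\alpha')}$ with $\alpha' = t\alpha$ from \eqref{eq: a'}, the first component of \eqref{eq: F normalizes} reads
\[A\beta + A\rho(\alpha)z + H_2(w+\alpha) = \beta' + \rho(t\alpha)Az + \rho(t\alpha)H_2(w).\]
Since $A\rho(\alpha) = \rho(t\alpha)A$ by \eqref{eq: relations}, the $z$-terms cancel, leaving
\[H_2(w+\alpha) - \rho(t\alpha)H_2(w) = \beta' - A\beta \in \Gamma'\]
for all $\alpha\in\Lambda_\tau$.

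The key step is to split $H_2 = E + h$ with $E$ satisfying the homogeneous equation $E(w+\alpha) = \rho(t\alpha)E(w)$ and $h\in\IC^n$ constant. By \eqref{eq: relations}, $H_2'$ is a section of the flat bundle with monodromy $\rho(t\,\cdot)$. Since each $\rho(t\alpha)$ is diagonal, I would work componentwise: the $i$-th component $g_i$ of $H_2$ satisfies
\[g_i(w+\alpha) = e^{t\lambda_i D_\tau(\alpha)}g_i(w) + \gamma_i(\alpha).\]
Take $\alpha = \tau$, for which $D_\tau(\tau)=0$ and $\rho(t\tau)=\id$. Then $g_i$ is $\tau$-quasi-periodic with a constant jump, so $g_i - \frac{\gamma_i(\tau)}{\tau}w$ is $\tau$-periodic. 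Using $\alpha = 1$, with multiplier $e^{t\lambda_i}\neq1$ (this is where $\lambda_i\ne0$ enters), one finds that $\gamma_i(\tau)$ must vanish. The affine cocycle then becomes a coboundary: setting $h_i = \gamma_i(1)/(1-e^{t\lambda_i})$ gives $g_i - h_i$ satisfying the homogeneous relation. Here one also checks that $\gamma_i(\tau)$ is compatible with $\gamma_i(1)$ through the cocycle identity coming from $1+\tau=\tau+1$.

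With $E := H_2 - h$, evaluating at $\alpha = 1$ gives $\beta'-A\beta = (\id-\Delta^t)h$ when $\beta = 0$. This forces $(\id-\Delta^{t})h\in\Gamma'$. For $t=-1$ I would convert this to the stated condition $(\id-\Delta)h\in\Gamma'$ via $(\id-\Delta^{-1}) = -\Delta^{-1}(\id-\Delta)$, using that $\Delta^{\pm1}$ preserves $\Gamma'$ (as $\rho(\Lambda_\tau)$ preserves $\Gamma'$). The same evaluation yields $\beta' = A\beta + (\id-\rho(t\alpha))h$, the formula in the statement. For $A\Gamma' = \Gamma'$ rather than just $\subseteq$, I would apply the previous argument to a lift of $f^{-1}$: we had $A'A = \rho(\alpha)$, which preserves $\Gamma'$, so $A\Gamma'\supseteq\Gamma'$. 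The claim that $E\equiv0$ when $\Re(\tau)/|\tau|^2\notin\IQ$ follows by writing $E_i(w) = e^{t\lambda_iD_\tau(w)}u_i(w)$ with $u_i$ smooth and $\Lambda_\tau$-periodic. Holomorphicity of $E_i$ turns into $\delbar u_i = -\frac{t\lambda_i\tau}{\tau-\bar\tau}u_i\,d\bar w$. Fourier expansion on $\IC/\Lambda_\tau$ then shows that a nonzero solution needs a character matching $f_{IJ}$ with $c=t\lambda_i\neq0$, which is excluded exactly as in Proposition \ref{prop: non rational}.

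For the converse, given $F$ of the stated form, I would simply verify $F\circ L_{(\beta,\alpha)} = L_{(\beta',\alpha')}\circ F$ with the displayed $(\beta',\alpha')$, and check $(\beta',\alpha')\in\Gamma$. Here $t\alpha\in\Lambda_\tau$ and $A\beta\in\Gamma'$. The term $(\id-\Delta^{m})h$ lies in $\Gamma'$ by the telescoping $\id-\Delta^m = (\id+\Delta+\dots+\Delta^{m-1})(\id-\Delta)$ and $\Delta$-invariance of $\Gamma'$, with a similar identity for negative $m$. Invertibility of $F$, with inverse of the same shape, shows that $F$ normalizes $\Gamma$ and descends to an automorphism. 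I expect the main obstacle to be the cohomological step, namely showing that the affine cocycle of $H_2$ is cohomologous to a constant $h$ with vanishing $\tau$-jump, since a linear term in $w$ must be ruled out carefully.
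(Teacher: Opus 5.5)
Your argument is correct, but its central step differs from the paper's. The paper differentiates. By the second relation in \eqref{eq: relations}, each $\partial_w H_{2,i}$ satisfies the homogeneous twisted periodicity exactly. Corollary \ref{cor: f = 0} (a Fourier expansion in the $1$-periodic direction) then shows that $\partial_w H_{2,i}$ is a single exponential $c_i e^{(2\pi\ii m_i + t\lambda_i)w}$, and integrating gives $H_{2,i} = E_i + h_i$. Only after this does the paper evaluate \eqref{eq: F normalizes} to get $(\id - \rho_\tau(t\alpha))h = \beta' - A\beta$.

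You never differentiate. You treat $\gamma(\alpha) = \beta' - A\beta$ as an affine $1$-cocycle of $\Lambda_\tau$ for the twisted action $\rho_\tau(t\,\cdot)$, and show it is the coboundary of a constant. The decisive computation is that evaluating $g_i(w + 1 + \tau)$ in the two orders gives $(e^{t\lambda_i} - 1)\gamma_i(\tau) = 0$. Hence $\gamma_i(\tau) = 0$, no linear term survives, and $h_i = \gamma_i(1)/(1 - e^{t\lambda_i})$ does the job. You should state this computation up front rather than as a side check.

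What your route buys is that the splitting $H_2 = E + h$ and the formula for $\beta'$ come out purely algebraically, without classifying solutions of the homogeneous equation. It also isolates exactly where $\lambda_i \neq 0$ is used. The paper needs its technical lemma first, but in exchange gets the explicit exponential form of $E$ along the way.

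Your separate $\delbar$-argument for $E \equiv 0$ is sound. Writing $E_i = e^{t\lambda_i D_\tau(w)}u_i$ with $u_i$ periodic and expanding on $\IC/\Lambda_\tau$, at most one character can occur, and its periodicity is exactly the pair of conditions in Proposition \ref{prop: non rational}. The remaining steps match the paper: $A\Gamma' = \Gamma'$ via a lift of $f^{-1}$, and the converse via an inverse of the same shape, which is where $A^{-1}\Gamma' = \Gamma'$ is needed.
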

\begin{proof}
Let
\[f(z, w) = (Az + H_2(w), tw + \sigma)\]
be a lift of $f$ to $\IC^{n + 1}$ as in Proposition \ref{prop: form of lift}, and write $H_{2, i}(w)$ for the $i^\text{th}$ component of $H_2$. By the second equation in \eqref{eq: relations}, $H_{2, i}(w)$ satisfies
\[e^{\lambda_i D(\alpha)} \frac{\partial H_{2, i}}{\partial w}(w) = \frac{\partial H_{2, i}}{\partial w}(\alpha + w),\]
so by Corollary \ref{cor: f = 0} (see Section \ref{sect: technical lemma}) we have
\[H_{2, i}(w) = \underbrace{\frac{c_i}{2\pi\ii m_i + t \lambda_i} e^{(2\pi\ii m_i + t \lambda_i)w}}_{E_i(w)} + h_i, \qquad c_i, h_i \in \IC, m_i \in \IZ,\]
with $c_i = 0$ if it is not possible to write $\tau = \tau(t \lambda_i, m_i, k_i)$ for any $m_i, k_i \in \IZ$ (in case $\frac{\Re(\tau)}{|\tau|^2} \notin \IQ$ we have $c_i = 0$ for every $i$). Observe that the function $E = (E_1, \ldots, E_n)^T$ satisfies
\[\rho(t\alpha) \cdot E(w) = E(\alpha + w), \qquad \text{for every } \alpha \in \Lambda_\tau.\]

It follows from \eqref{eq: F normalizes} that $\beta' + \rho(t\alpha) h = A\beta + h$, i.e., that
\[\pa{\id - \Delta^{t D_\tau(\alpha)}}h = \beta' - A\beta \in \Gamma'.\]
Using the fact that $\Delta$ preserves $\Gamma'$, it is not hard to prove by induction that $\pa{\id - \Delta^k}h \in \Gamma'$ for every $k \in \IZ$ if and only if $(\id - \Delta)h \in \Gamma'$.

We have proven up to now that the parameters $t, A, h, \sigma$ of the lift satisfy $t \in \set{1, -1}$, $\Delta^t A = A \Delta$, $A \cdot \Gamma' \leq \Gamma'$ and $(\id - \Delta)h \in \Gamma'$. Observe that these conditions are equivalent to the fact that $F \circ \Gamma \circ F^{-1} \subseteq \Gamma$. Consider then the inverse map
\[F^{-1}(z, w) = \pa{A^{-1}z - A^{-1}h, tw - t\sigma},\]
as $F^{-1}$ lifts $f^{-1}$ the same conclusions must hold for $F^{-1}$. In particular, it holds that:
\begin{enumerate}
\item $t \in \set{1, -1}$ which is already the case;
\item $\Delta^t A^{-1} = A^{-1} \Delta$, which follows from $\Delta^t A = A \Delta$ for the particular case where $t \in \set{1, -1}$;
\item $A^{-1} \cdot \Gamma' \leq \Gamma'$;
\item $-(\id - \Delta)A^{-1}h \in \Gamma'$, which is true since $(\id - \Delta)A^{-1}h = A^{-1}(\id - \Delta^t)h$, $A^{-1}$ preserves $\Gamma'$ and $(\id - \Delta^t)h \in \Gamma'$ as $(\id - \Delta)h \in \Gamma'$.
\end{enumerate}
Hence we deduce that $A \cdot \Gamma' = \Gamma'$ and since these last conditions express the fact that $F^{-1} \circ \Gamma \circ F \subseteq \Gamma$ we conclude that in fact $F$ normalizes $\Gamma$, and the theorem is proven.
\end{proof}

\subsection{A technical Lemma}\label{sect: technical lemma}

As it is clear from the work in Section \ref{sect: automorphisms}, we are interested in study the holomorphic functions $f: \IC \longrightarrow \IC$ such that
\[e^{\lambda D_\tau(\alpha)} f(w) = f(\alpha + w) \qquad \text{for every } \alpha \in \Lambda_\tau.\]
In fact, this is the form in which appears every component of the second equation in \eqref{eq: relations}.

\begin{lemma}
Let $\lambda \in \IR \smallsetminus \set{0}$ and $\tau \in \cH$. Then the holomorphic functions $f: \IC \longrightarrow \IC$ such that $f(z + 1) = e^\lambda f(w)$ and $f(w + \tau) = f(w)$ for every $w \in \IC$ are those of the form
\[f(w) = c \cdot e^{(2\pi\ii m + \lambda)w}, \qquad c \in \IC.\]
If, moreover, $\frac{\Re(\tau)}{|\tau|^2} \notin \IQ$, then $c = 0$, i.e., $f$ is the trivial function $f(w) \equiv 0$.
\end{lemma}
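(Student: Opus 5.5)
The plan is to reduce the twisted periodicity to ordinary $1$-periodicity and then expand in a Fourier (Laurent) series in $e^{2\pi\ii w}$. I read the hypotheses as $f(w+1) = e^{\lambda} f(w)$ and $f(w+\tau) = f(w)$. These are exactly the cases $\alpha = 1$ and $\alpha = \tau$ of $e^{\lambda D_\tau(\alpha)} f(w) = f(\alpha + w)$, since $D_\tau(1) = 1$ and $D_\tau(\tau) = 0$.

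First set $g(w) = e^{-\lambda w} f(w)$. This is entire and satisfies $g(w+1) = g(w)$, so it factors through $q = e^{2\pi\ii w} \in \IC^*$ as a holomorphic function there. Its Laurent expansion gives a series
\[f(w) = \sum_{m \in \IZ} a_m e^{(2\pi\ii m + \lambda) w},\]
converging locally uniformly on $\IC$. The coefficients are uniquely determined, for instance by $a_m = \int_0^1 g(x + \ii y) e^{-2\pi\ii m (x + \ii y)}\, dx$.

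Next impose the second condition $f(w+\tau) = f(w)$. Both sides are series of the same form. The function $f(w+\tau)$ has coefficients $a_m e^{(2\pi\ii m + \lambda)\tau}$, and it still satisfies $f(w+\tau+1) = e^\lambda f(w+\tau)$, so its coefficients are unique as well. Comparing coefficients gives
\[a_m\bigl(e^{(2\pi\ii m + \lambda)\tau} - 1\bigr) = 0 \qquad \text{for every } m \in \IZ .\]
Hence $a_m \neq 0$ forces $(2\pi\ii m + \lambda)\tau = 2\pi\ii k$ for some $k \in \IZ$. Since $\lambda \neq 0$, solving for $\tau$ gives
\[\tau = \frac{2\pi\ii k}{\lambda + 2\pi\ii m} = \frac{2k\pi}{4\pi^2 m^2 + \lambda^2}\,(2m\pi + \ii\lambda) = \tau(\lambda, m, k),\]
with $k\lambda > 0$ because $\Im(\tau) > 0$. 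So $(\lambda, m, k) \in D$.

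The main step is to show that at most one $m$ survives. Suppose $a_m$ and $a_{m'}$ are both nonzero. Then $\tau(\lambda, m, k) = \tau(\lambda, m', k')$, and Lemma \ref{lemma: preimages of tau} shows that $(\lambda, m', k')$ is a rational multiple of $(\lambda, m, k)$. Since both triples share the first entry $\lambda \neq 0$, the multiple is $1$, so $m = m'$. Therefore $f(w) = c\, e^{(2\pi\ii m + \lambda) w}$ with $c = a_m$, where $m$ is the unique integer (if one exists) with $\tau = \tau(\lambda, m, k)$. If no such $m$ exists, all coefficients vanish and $f \equiv 0$. Conversely, a direct check shows that such an exponential satisfies both functional equations.

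Finally, suppose $c \neq 0$. Then $\frac{\Re(\tau)}{|\tau|^2} = \frac{m}{k} \in \IQ$, as computed in the proof of Lemma \ref{lemma: preimages of tau}. So if $\frac{\Re(\tau)}{|\tau|^2} \notin \IQ$, necessarily $c = 0$, which gives the last assertion.

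The only delicate point is justifying the coefficient comparison, namely that an entire function satisfying $f(w+1) = e^\lambda f(w)$ has a unique expansion of this form. This is settled by the Laurent series of $g$ on $\IC^*$, so no growth estimate is needed.
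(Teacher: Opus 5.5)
Your proposal is correct and takes essentially the same route as the paper. Both write $f = e^{\lambda w} g$ with $g$ $1$-periodic and expand $g$ in a Fourier series. Both use $\tau$-periodicity to force $(2\pi\ii m + \lambda)\tau \in 2\pi\ii\IZ$ for every nonzero coefficient, and then apply Lemma \ref{lemma: preimages of tau} with the first entry $\lambda$ fixed to see that at most one coefficient survives. Your version also fills in steps the paper leaves implicit: the uniqueness of Laurent coefficients behind the coefficient comparison, the check that $k\lambda > 0$, and the converse verification.
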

\begin{proof}
The condition that $f(w + 1) = e^\lambda f(w)$ is equivalent to require that the holomorphic function $g(w) = \frac{f(w)}{e^{\lambda w}}$ is $1$-periodic. Such a $g$ can be written in Fourier series as
\[g(w) = \sum_{m \in \IZ} B_m e^{2\pi \ii mw}\]
for suitable $B_m \in \IC$, so we have that $f$ has also the Fourier series expansion
\[f(w) = e^{\lambda w} g(w) = \sum_{m \in \IZ} B_m e^{(2\pi\ii m + \lambda)w}.\]
It remains to control when $f$ is $\tau$-periodic. Assume that $m_0 \in \IZ$ is such that $B_{m_0} \neq 0$, then we must have $e^{(2\pi\ii m_0 + \lambda)(w + \tau)} = e^{(2\pi\ii m_0 + \lambda)w}$, i.e., $(2\pi\ii m_0 + \lambda) \tau = 2k\pi\ii \in 2\pi\ii \cdot \IZ$ for a suitable $k \in \IZ$. Hence
\[\tau = \frac{2k\pi}{\lambda^2 + 4\pi^2 m_0^2}(2\pi m_0 + \ii \lambda) = \tau(\lambda, m_0, k),\]
which satisfies $\frac{\Re(\tau)}{|\tau|^2} = \frac{m_0}{k} \in \IQ$. As $\lambda$ is fixed, by Lemma \ref{lemma: preimages of tau} we deduce that $(\lambda, m_0, k)$ is uniquely determined, hence that $B_{m_0}$ is the only non trivial coefficient.

This shows that
\[f(w) = c \cdot e^{(2\pi\ii m + \lambda)w}, \qquad c \in \IC\]
and if $\frac{\Re(\tau)}{|\tau|^2} \notin \IQ$ then $c = 0$.
\end{proof}

\begin{cor}\label{cor: f = 0}
Let $f: \IC \longrightarrow \IC$ be a holomorphic function such that
\[e^{\lambda D_\tau(\alpha)} f(w) = f(\alpha + w) \qquad \text{for every } \alpha \in \Lambda_\tau\]
for some $\lambda \in \IR \smallsetminus \set{0}$ and $\tau \in \cH$. Then
\[f(w) = c \cdot e^{(2\pi\ii m + \lambda)w}, \qquad c \in \IC,\]
with $c = 0$ if $\frac{\Re(\tau)}{|\tau|^2} \notin \IQ$.
\end{cor}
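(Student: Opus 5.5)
The plan is to reduce the corollary to the preceding Lemma by specialising the functional equation to the two generators $1$ and $\tau$ of $\Lambda_\tau$. Since $D_\tau$ is the first coordinate with respect to the real basis $\set{1, \tau}$, we have $D_\tau(1) = 1$ and $D_\tau(\tau) = 0$. Taking $\alpha = 1$ in the hypothesis gives $f(w + 1) = e^{\lambda} f(w)$ for every $w \in \IC$. Taking $\alpha = \tau$ gives $f(w + \tau) = f(w)$ for every $w \in \IC$. These are exactly the two hypotheses of the Lemma.

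Conversely, the hypothesis for an arbitrary $\alpha = a + b\tau \in \Lambda_\tau$ carries no extra information. It follows from the two generator relations by iterating them, using negative iterates for negative $a, b$; each iterate multiplies $f$ by $e^{\lambda a} = e^{\lambda D_\tau(\alpha)}$. So the class of functions in the corollary coincides with the class in the Lemma. Applying the Lemma then yields $f(w) = c\, e^{(2\pi\ii m + \lambda)w}$ for some $c \in \IC$ and $m \in \IZ$.

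For the vanishing statement, the Lemma also gives $c = 0$ when $\frac{\Re(\tau)}{|\tau|^2} \notin \IQ$. The reason is that a nonzero coefficient $B_m$ forces $(2\pi\ii m + \lambda)\tau \in 2\pi\ii\IZ$. This means $\tau = \tau(\lambda, m, k)$, and Lemma \ref{lemma: form of tau} then shows $\frac{\Re(\tau)}{|\tau|^2} = \frac{m}{k} \in \IQ$.

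I expect no real obstacle. The only point needing care is the Lemma's claim that at most one Fourier coefficient survives. Two surviving indices $m_0 \neq m_1$ would give triples $(\lambda, m_0, k_0)$ and $(\lambda, m_1, k_1)$ with the same image under $\tau$. By Lemma \ref{lemma: preimages of tau} these triples are proportional, and since the first entries both equal $\lambda \neq 0$ the factor is $1$, forcing $m_0 = m_1$. If any doubt remains about this step, I would re-derive it directly from $\frac{h}{k} = \frac{\Re(\tau)}{|\tau|^2}$ and $\frac{c}{k} = 2\pi\frac{\Im(\tau)}{|\tau|^2}$ with $c = \lambda$ fixed, which pins down $k$ and then $m$.
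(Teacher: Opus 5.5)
Your proposal is correct and follows the paper's proof: specialising to $\alpha = 1$ and $\alpha = \tau$ (where $D_\tau(1) = 1$ and $D_\tau(\tau) = 0$) gives exactly the two hypotheses of the preceding Lemma, which you then apply. One small citation slip: the equality $\frac{\Re(\tau)}{|\tau|^2} = \frac{m}{k}$ follows by direct computation from the explicit formula for $\tau(\lambda, m, k)$, not from Lemma \ref{lemma: form of tau}, since that Lemma assumes this rationality as a hypothesis.
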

\begin{proof}
Just observe that $f$ satisfies the relation in the statement if and only if $e^\lambda f(w) = f(w + 1)$ and $f(w) = f(w + \tau)$ for every $w \in \IC$.
\end{proof}

\bibliographystyle{alpha}
\bibliography{NakamuraMfldsBib}

\end{document}